\documentclass[
]
{article}
\usepackage[dvips]{geometry}

\usepackage[numbers]{natbib}
\usepackage[center]{titlesec}

\usepackage{amsmath}
\usepackage{amsfonts}
\usepackage{amsthm}

\usepackage{bbold}
\usepackage{fancybox}
\usepackage{tikz}

\usepackage{mathdots}

\usepackage{setspace}

\usepackage{titlesec}
\titleformat{\subsection}[runin]
{\normalfont\large\bfseries}{\thesubsection}{1em}{}

\DeclareMathOperator{\rg}{rg}
\DeclareMathOperator{\Res}{Res}
\DeclareMathOperator{\id}{id}
\DeclareMathOperator{\Jac}{Jac}

\DeclareMathOperator{\PGCD }{PGCD }
\DeclareMathOperator{\im}{Im}

\DeclareMathOperator{\rk}{rk}

\usepackage{amssymb}
\usepackage[all]{xy}
\usepackage[nottoc, notlof, notlot]{tocbibind}
\usepackage{natbib}
\usepackage{pb-diagram}
\usepackage{pb-xy}
\usepackage{amssymb} 
\usepackage{graphicx} 
\usepackage{blkarray}
\usepackage{titling, lipsum}

\usepackage{tikz-cd}

\newtheorem{theo}{Théorème }[section]
\newtheorem{lem}{Lemme}[section]
\newtheorem{prop}{Proposition}[section]
\newtheorem{coro}{Corollaire}[section]
\newtheorem{rem}{Remarque}[section]
\newtheorem{defi}{Définition}[section]

\newtheorem*{prop*}{Proposition}
\newtheorem*{defi*}{Définition}
\newtheorem*{rem*}{Remarque}
\usepackage{cancel}
\makeatletter
\newcommand*{\encircled}[1]{\relax\ifmmode\mathpalette\@encircled@math{#1}\else\@encircled{#1}\fi}
\newcommand*{\@encircled@math}[2]{\@encircled{$\m@th#1#2$}}
\newcommand*{\@encircled}[1]{%
 \tikz[baseline,anchor=base]{\node[draw,circle,outer sep=0pt,inner sep=.2ex] {#1};}}

\title{Stratifications des fibres singulières des systèmes de Mumford}
\author{ {Yasmine Fittouhi} \\
\\
\\
Department of Mathematics\\
Faculty of Natural Sciences\\
University of Haifa\\
199 Abba Khoushy Avenue\\
Mount Carmel\\
Haifa, 3498838, Israel }

\date{\today}
\begin{document}
\maketitle
Mots clés : Système intégrable, stratification, singularité, courbe hyperelliptique.\\
\bigskip

\textbf{Résumé:} Un système intégrable est un système dynamique 
 caractérisé par 
l'existence de constantes de mouvement
et l'existence d'invariants algébriques, ayant une base en géométrie algébrique.\\ 
Dans les années 1970, Mumford introduit un nouveau système complètement intégrable défini sur une courbe hyperelliptique lisse. Dans les années 2000, Vanhaecke a complété la description du système intégrable de Munford en définissant une structure de Poisson sur l'espace de phase du système de Mumford.

Dans cet article nous étudierons le système de Mumford singulier. Le point de depart consiste à déterminer quand et pourquoi le système de Mumford est singulier. Pour cela on fera une étude approfondie pour comprendre ce qui se passe aux singularités, en utilisant le concept de stratification. Nous définirons deux stratifications de l'espace de phase, une stratification algébrique et l'autre stratification géométrique. On prouvera que ces stratifications sont identiques et elles nous permettront de définir une stratification plus fine sur chaque fibre du système de Mumford. Nous conclurons cet article par le résultat étonnant suivant: chaque strate d'une fibre est une partition de sous variétés quasi-affine équidimentionelles. 

\bigskip

\textbf{Remerciements:} Je tiens à remercier Professeur Pol Vanhaecke qui m’a fait découvrir les systèmes intégrables et leurs connexions avec les autres disciplines mathématiques, et de m’avoir dirigée et guidée
vers les systèmes de Mumford.
\ 

Plus que tout je suis reconnaissante au Professeur Antony Joseph qui m'a aidée à mettre en valeur et à structurer mes résultats.

\section{introduction} 

%

\subsection*{1} La mécanique classique moderne est décrite à travers la mécanique Hamiltonienne 
où les coordonnées 
positions et vitesses des mobiles sont regroupées pour former un ensemble qu'on appelle l'espace de phases, 
 cet espace admet 
une structure de Poisson. 
Jacobi a reformulé 
la mécanique Hamiltonienne
en utilisant le puissant formalisme du crochet de Poisson
où l'évolution temporelle des variables canoniques $q$ 
est donnée par un
Hamiltonien $H$ 
via l'équation ${\partial q}/{\partial t} =\{H, q\}$.\\

Un système hamiltonien a des chances d'être résolu s'il a suffisamment de constantes de mouvement. Rappelons qu'une constante de mouvement est une $F$ fonction 
de l'espace de phase 
 indépendante du temps 
telle que $\partial F/\partial t = \{F, H\} = 0$
. Le crochet de Poisson dévoile sa force lors de la recherche des constantes de mouvement 
car leurs crochets de Poisson commutent avec l'Hamiltonien. 
\\
En effet 
un système dynamique conserve l'énergie car son Hamiltonien est indépendant du temps, $\partial H/\partial t = \{H, H\} = 0$. 

\medskip

 Un système dynamique est dit complètement integrable au sens de Liouville s'il satisfait les deux propriétés suivantes:\\
$\bullet $ La première propriété est que l'espace de phase $M$ soit une variété de Poisson lisse de dimension $2n$ munie du crochet de Poisson $\{\cdot, \cdot\}$ de rang $2n$.\\ 
$\bullet $ La seconde propriété est la donnée de $n$ fonctions lisses $(F_i)_{1 \leqslant i \leqslant n} : M \rightarrow \mathbb{C}^n$ qui génèrent $n$ champs vectoriels $(\chi_{F_i}=\{., F_i\})_{1 \leqslant i \leqslant n}$ 
 linéairement indépendants sur un ouvert dense de $M$ 
 , 
 avec $\{F_i, F_j\}=0 $ pour tout ${1 \leqslant i, j \leqslant n}$. \\
Un système Hamiltonien complètement intégrable est dit maximal 
lorsque les champs de vecteurs $(\chi_{F_i})_{1 \leqslant i \leqslant n}$ sont linéairement indépendants en tout point de l'espace de phase.

\

Un point $ m\in M$ est dit régulier si les champs de vecteurs associés à la famille de fonctions $F=(F_i)_{1 \leqslant i \leqslant n}$ par la structure de Poisson sont linéairement indépendants au point $m$. Un point $c\in \mathbb{C}^n$ est une valeur régulière si la fibre $F^{-1}(c)$ ne contient que des points réguliers.\\
Soit $c$ une valeur régulière de $ \mathbb{C}^n$, le théorème de Arnold-Liouville (voir \cite[ page 342 , théorème 12.11]{LGV}) affirme que la fibre $ F^{-1}(c)$ est difféomorphe à $\mathbb{C}^{2n-k}\times \mathbb{T}^{k}$ où $ \mathbb{T}$ est un tore. On appelle la partie torique de $ F^{-1}(c)$ le tore de Liouville. 
\\
Les points singuliers de $M$, (respectivement les valeurs singulières de $\mathbb{C}^n$) sont les points (respectivement les valeurs) qui ne vérifient pas les conditions de régularité.\\

Dans la section \ref{section2}, les fibres du système décrites par Mumford et Vanhaecke (voir théorème \ref{in}) sont au dessus des points réguliers.\\
Notamment, dans la section \ref{section3} les fibres du système que nous décrivions sont au dessus des valeurs singulières (voir proposition \ref{SM}).
\ 
\medskip
\

\subsection*{2.} 
Les systèmes hamiltoniens complètement intégrables admettent une solution, néanmoins nous avons toujours une expression explicite de leurs solutions. 
Les deux exemples suivants: le système de Kepler et le système de Toda, sont des systèmes complètement intégrables maximaux et on connait leur solution explicitement.\\
%
L'espace de phase du système de Kepler $\mathbf{T}^*\mathbb{R}^3 =\langle q_1, q_2, q_3, p_1, p_2, p_3\rangle$ ou $q=(q_i)_{1 \leqslant i \leqslant 3}$ est le vecteur position, les $p=(p_i)_{1 \leqslant i \leqslant 3}$ est le vecteur moment d'une planète. L'espace $\mathbf{T}^*\mathbb{R}^3 $ est muni d'un crochet de Poisson canonique\footnote{où les seuls crochets non nuls sont $\{q_i , p_i\}=1$ pour $i\in\{1, 2, 3\}$.}.
L'hamiltonien du systeme de Kepler est $H =1/2 \sum\limits^3_{i=1}p_i^2+V(r)$
où $V(r)=1/r$ 
avec $r 
$ la distance entre les deux corps (planète et soleil). \\
Le théorème de Noether nous affirme que le moment cinétique $L = q\times p$ est une constante du mouvement pour toute charge centrale. 
Le système de Kepler admet une
 constante de mouvement supplémentaire appelée vecteur de Laplace-Runge-Lenz (vecteur LRL) et est notée
$A = L \times p + \frac{q}{r}$
avec $\{H, A\} =0$.
La conservation du vecteur de LRL est associée à une symétrie 
cachée mise en évidence par la représentation mathématique qui utilise l'inverse de la projection stéréographique de l'espace de phase du problème de Kepler et l'identifie
 à une particule se déplaçant librement sur une sphère; 
ainsi on prouve que le système de Kepler est complètement résoluble d'une manière époustouflante.
\

En 1967, le
physicien Toda a défini un système intégrable multidimensionnel qui porte son nom.
Le système de Toda est un système integrable. On trouve les details de ce système dans \cite{B} et \cite{BK}.


\bigskip
 
\subsection*{3.} Dans les années 70, Mumford tombe sur un nouveau système complètement intégrable, lors 
de son étude du diviseur thêta de la jacobienne d'une courbe hyperelliptique lisse 
 de genre $g$. 
 On sait que la jacobienne d'une courbe hyperelliptique lisse est un tore. Dans cette optique, Mumford a développé un système complètement integrable dont les fibres régulières sont isomorphes aux tores de Liouville. La similitude entre une fibre régulière d'un système complètement integrable 
 et la jacobienne d'une courbe est qu'ils soient tous deux isomorphes à un tore.

Soit $\mathcal{C}: y^2=h(x)$ une courbe hyperelliptique lisse de genre $g$, Mumford 
 a réussi à établir le lien entre la jacobienne $\Jac(\mathcal{C})$ de $\mathcal{C}$ et la fibre d'une valeur régulière du système integrable. La méthode utilisée par Mumford pour passer d'une courbe hyperelliptique $\mathcal{C}$ à un système intégrable, 
 est fort intéressante; Cependant, Mumford explique peu les motivations qui l'ont mené à développer le système integrable (voir section \ref{section2} partie \ref{Meth1}). \\
Soient $g$ points génériques distincts $(x_i, y_i)$ sur $\mathcal{C}$. On définit trois polynômes, le premier polynôme $u(x)=\prod_i (x-x_i)$, le deuxième polynôme $v$ doit satisfaire cette condition $v(x_i)=y_i$, Le troisième polynôme $w(x)$ est obtenu par construction en divisant $h(x)-v^2(x)$ par $u(x)$ (voir section \ref{section2} équation (\ref{uvx''}) pour la définition formelle de $w$.).\\
Notez que $u(x)w(x) +v^2(x) =h(x)$, (le polynôme $h$ est l'hamiltonien qu'on considérera). 
Le polynôme $h$ coïncide à moins $(-)$ le déterminant de la matrice de trace-zéro, dont les entrées 
sont les polynômes $u, v$ et $w$, 
$h=-\det 
\left(\begin{array}{cc}
v &{u} \\
w & -v
\end{array} \right)$. \\
L'ensemble des $g$ points génériques de la courbe $\mathcal{C}$ forment le groupe appelé la jacobienne de la courbe et est noté $\Jac(\mathcal{C})$.\\
Soient $a, b, c \in \Jac(\mathcal{C})$ et soit $\ell_c$ la translation à gauche par $c$ sur $\Jac(\mathcal{C}) $ tels que $b=\ell_c(a)=ca$. Les espaces tangents aux points $a$ et $b$ sont isomorphes par la différentiation de la translation à gauche $D(\ell_{c}) : \mathbb{T}_a{\Jac(\mathcal{C})} \rightarrow \mathbb{T}_{b}{\Jac(\mathcal{C})} .$
On note par $1$ l'élément identité du groupe $ \Jac(\mathcal{C})$, soit $\chi$ un champ de vecteur. On dit qu'un champ de vecteurs est invariant par translation si $\chi_c= D(\ell_{c}) [\chi_1 ]$ pour tout $c\in {\Jac(\mathcal{C})}$. Rappelons que les champs vectoriels de $\Jac(\mathcal{C})$ forment une algèbre de Lie.\\
Mumford a obtenu un champ de vecteurs invariant par translation pour définir un champs hamiltonien, ce champ de vecteurs est le point de départ pour définir l'évolution des $g$ points aux travers du flot de ce dernier, ce qui lui permet de définir 
 l'évolution temporelle de $u, v, w$ par (\ref{m2}) et (\ref{m11}) et il déduit que l'évolution temporelle de 
 $h$ est nulle (e.i $\partial h/\partial t=0$)! cette contexture est un miracle supplémentaire (voir section \ref{section2}). L'égalité $\partial h/\partial t=0 $ implique que 
les coefficients des puissances de $x$ du polynôme $h(x)$ sont des constantes de mouvement c'est-à-dire invariants par l'évolution temporelle. 
Ainsi Mumford a défini un système dynamique sur l'espace de phase composé de $g$ points génériques de la courbe $\mathcal{C}$.

\

Comme l'a souligné Hitchin \cite{lettre}, on peut se demander s'il y a d'autres variétés 
conduisant à des systèmes intégrables complets ?

\medskip

\subsection*{4.} La section \ref{section2} de cet article est un exposé 
 détaillé de la procédure établie par Vanhaecke pour définir le système de Mumford ainsi que son integrabilité. Tout d'abord, Vanhaecke a introduit la variété $\mathbf{M}_{g}$ de dimension supérieure à $2g+1$ définie par les coefficients des polynômes $u(x), v(x), w(x)$, leurs coefficients sont en fonction des fonctions coordonnées $x_i$ et $y_i$ \footnote{ Les fonctions coordonnées $(x_i, y_i)_{1 \leqslant i \leqslant g}$ peuvent être comme des points en position générale d'une courbe hyperelliptique.} 
 Les polynômes $u(x), v(x)$ sont définis comme plus haut dans le paragraphe 3, mais avec une différence majeure les $x_i$ et $y_i$ sont des fonctions coordonnées. Vanhaecke met une contrainte sur le polynôme $w(x)$ en le caractérisant au travers $u(x)$ et $v(x)$ par l'algorithme euclidien du quotient de l'égalité (\ref{dure}). 
On définit la structure de Poisson sur la variété $\mathbf{M}_g$ en deux étapes: La première étape consiste à établir le crochet canonique de Poisson avec $\{x_i, y_i\}=1$ et le reste des autres crochets des fonctions coordonnées sont zéro. La deuxième consiste à écrire explicitement le crochet de Poisson $\{ u(x), v(x)\}$ par un calcul en fonction des fonctions coefficients de $u(x), v(x)$, ce dernier est donné par (\ref{gea}). Les crochets de Poisson $u, v$ avec $ w$ sont plutôt délicats à obtenir car $w$ est donné par l’algorithme euclidien. Les calculs faits pour obtenir le lemme \ref{Pois1}, et les equations (\ref{5'}) et (\ref{6'}) pour définir la structure de Poisson sont extraits des travaux de Vanhaecke. 
L'espace de phase composé des polynômes $u, v$ et $w$ peut être introduit directement par l'égalité (\ref{Mg}) comme dans \cite{these} pour définir le système intégrable.\\
En prenant, $h=v^2+uw$ comme l'hamiltonien pour cette structure de Poisson. Remarquablement, on obtient le système dynamique de Mumford avec les constantes de mouvement les fonctions $(h_i)_{{0 \leqslant i \leqslant 2g-1}}$ les fonctions coefficients du polynôme $h(x)$. \\
Le système de Mumford étudié par Mumford et Vanhaecke sont des fibres au dessus de valeurs régulières par la fonction $\mathbf{H}$ définie par (\ref{H}), nous montrons dans la section \ref{section3} par théorème \ref{prop1} que les points réguliers du système de Mumford forme l'ensemble appelé la strate maximale.
La section (\ref{section2}) de cet article est un exposé détaillé de la théorie Mumford-Vanhaecke. \\

\bigskip

\subsection*{5.} Mumford a utilisé son système dynamique pour construire la jacobienne d'une 
courbe $\mathcal{C}:y^2-h(x)=0$ hyperelliptique lisse sans singularité, cela se produit seulement quand le triplet de polynômes
$u, v$ et $w$ de $M_g$ et le polynôme $h=v^2+uw$ n'admettent pas de racine commune. 
\
Soit l'application $\mathbf{H}:M_g\rightarrow \mathbb{C}[x]$ telle que $\mathbf{H}(A(x))=-\det(A(x))$.
La principale contribution originale de cet article est détendre la théorie des systèmes de Mumford aux
fibres singulières de l'application $\mathbf{H}$ (e.i aux 
courbes hyperelliptiques singulières), et nous laissons pour un article ultérieur l'étude de la
jacobienne de courbes hyperelliptiques singulières. Pour cela, 
 nous décrivons une première stratification basée sur l'analyse algébrique des fibres de $\mathbf{H}$ via le $\PGCD$ de $u(x), v(x), w(x)$ (voir proposition \ref{p30}). 
Même si cela peut sembler naturel sinon évident, la démonstration n'est pas directe et nécessite une analyse des noyaux des matrices de Toeplitz associées aux polynômes $u, v, w$.\\
Puis nous présentons une deuxième stratification géométrique des fibres à travers le degré d'indépendance des champs de vecteurs associés à $(h_i)_{{0 \leqslant i \leqslant g-1}}$ les fonctions coefficients de $\mathbf{H}$ (voir proposition \ref{st}). Cela implique une étude rigoureuse des équations de Lax (\ref{cc}) et (\ref{c'})) qui déterminent les champs de vecteurs associés à $\mathbf{H}$ et les $(h_i)_{0 \leqslant i \leqslant g-1}$. 

\
Remarquablement, nous montrons dans le théorème \ref{prop1} que ces deux stratifications coïncident. 
La preuve de la coïncidence des deux stratifications est nécessaire pour montrer 
 que la strate maximale est lisse, constituée exactement des points non singuliers et sa fermeture est formée de tous les
points singuliers 
(section \ref{lissitude})! \\

Un résultat important qui découlent du théorème \ref{t43} et du corollaire \ref{cor3.3} est que les strates non maximales d'une fibre sont isomorphes aux strates maximales d'une fibre d'un système de degré inférieur. 
Il en résulte du corollaire \ref{cor3.3} que toute strate ne contient que des points localement non singuliers 
et en particulier les strates de même degré sont équidimensionnelles. L'équidimensionalité est un fait rare, on montrera que les variétés quasi-affines de même degrés que l'on obtient sont toutes équidimensionnelles au niveau de strate. Ce dernier résultat plante le décor pour la description des 
 jacobiennes de courbes hyperelliptiques singulières, 
 cette l'étude sera présentée dans un article ultérieur. \\
 
 Tous les résultats ainsi que leurs preuves exposés dans cet article s'adaptent aux courbes hyperelliptiques $\mathcal{C}: y^2=h(x)$ avec $h$ un polynôme unitaire de degré pair.

\section{ Système de Mumford }\label{section2}
Les systèmes Hamiltoniens intégrables sont généralement définis dans le contexte de la géométrie symplectique, 
l'extension du concept d'intégrabilité la plus naturelle pour plusieurs
systèmes dynamiques est le contexte de géométrie de Poisson, où l'accent est mis sur la structure algébrique de Lie et favorise l'émergence de fonctions de Casimir qui apparaissent intrinsèquement quand la dimension de la variété de Poisson est strictement supérieure à deux fois le degré de liberté du système intégrable (voir par exemple théorème \ref {in} et l'égalité (\ref{dimg})). 

\medskip

Les systèmes de Mumford peuvent être définis par le biais de deux approches, la première approche se fait au travers du prisme des dérivations dynamiques qui a été établie par Mumford \cite{Mum}; la deuxième approche se fait à l'aide du prisme de la géométrie de Poisson qui fut exposée par Vanhaecke \cite{Pol}. Ces deux approches sont distinctes dans leur conceptualisation car la définition de la structure de Poisson est basée sur les fonctions coordonnées et les systèmes dynamiques sont basés sur l'évolution de points sur une courbe hyperelliptique. Dans cette section nous allons présenter ces deux approches et nous exposerons les liens subtils qui les lient. Commençons par le point de vue de Mumford. 

\medskip

Nous notons par $\mathbb{C}_k[x]$ l'ensemble des polynômes de degrés au plus $k$ et nous notons par $\mathbb{C}^1_k[x]$ l'ensemble des polynômes unitaires de degrés $k$. \\

\subsection{}\label{Meth1}Le motif premier de Mumford est de décrire la variété jacobienne associée à une courbe hyperelliptique et plus précisément le diviseur thêta, à cette fin il généra un système intégrable que nous allons exposer et étudier. Toutefois, la motivation de certains choix de Mumford comme l'introduction de polynômes $(u(x), v(x), w(x))$ définis par (\ref{uvx}), (\ref{uvx'}) et (\ref{uvx''}) restent mystérieuse
; c'est cet ésotérisme qui fait la beauté de ses systèmes intégrables, et qu'on essaiera de dissiper tout au long de cette section. \\

\medskip

Soit un entier naturel $g>2$. Fixons une courbe hyperelliptique $\mathcal{C}$ d'équation affine $y^2=h(x)$ où $h(x)=x^{2g+1}+\sum\limits h_ix^i$ est un polynôme de $\mathbb{C}^1_{2g+1}[x]$ avec uniquement des racines simples \footnote{ Dans cet article on focalisera notre attention uniquement sur les polynômes $h$ de degrés impairs, cependant notre étude se transpose aux polynômes $h$ de degrés pairs. }. La courbe $\mathcal{C}$ est une courbe lisse car le polynôme $h(x)$ n'a pas de racines multiples. Le genre algébrique de la courbe $\mathcal{C}$ est $g$. \\

Soit $ \mathcal{C}$ une courbe. On note par $ \mathcal{C}^g$, les $g$ copies 
de $ \mathcal{C}$. Le groupe symétrique $S_g$ agit naturellement (à droite) sur $ \mathcal{C}^g$, en permutant les facteurs. On a alors la variété 
 quotient $ \mathcal{C}^{(g)}= \mathcal{C}^g / S_g$, on appelle $ \mathcal{C}^{(g)}$ la $g$-ème puissance symétrique de $ \mathcal{C}$. 
Rappelons que la variété jacobienne d'une courbe lisse $ \mathcal{C}$ de genre $g$ est isomorphe à la variété $ \mathcal{C}^{(g)}$. \\
Toujours, dans la perspective de décrire la variété jacobienne d'une courbe hyperelliptique lisse $\mathcal{C}$ de genre $g$, Mumford a associé à tout $g$ points génériques $(p_i=(x_i, y_i))_{1 \leqslant i \leqslant g}$ de la courbe $\mathcal{C}$ ou autrement dit pour tout point générique $(p_i=(x_i, y_i))_{1 \leqslant i \leqslant g}\in \mathcal{C}^{(g)}$, deux polynômes $u(x)$ et $v(x)$ où 
\begin{align}\label{uvx}
u(x)&=\prod_{i=1}^g(x-x_i), \\ \label{uvx'}
v(x)&=\sum\limits_{i=1}^gy_i\prod_{\begin{array}{c}
i=1\\
 i\neq j\end{array}
}^g\frac{x-x_j}{x_i-x_j}. 
\end{align}
Notons que le polynôme $v(x)$ est bien défini car comme les points $(p_i)_{1 \leqslant i \leqslant g}$ sont génériques ceci implique que si $i\neq j$ alors $x_i\neq x_j$. \\
Remarquons que le polynôme $u(x)$ est unitaire de degré $g$ et le polynôme $v(x)$ est de degré au plus $g-1$, de plus pour $1 \leqslant i \leqslant g$. On remarque que $v(x_i)=y_i$, donc $v(x_i)^2-h(x_i)=0$; c'est à dire les $(x_i)_{1 \leqslant i \leqslant g}$ les racines du polynôme de $u(x)$ sont aussi des racines du polynôme $h(x)- v^2(x)$ par conséquent $h(x)- v^2(x)$ est un multiple de $u(x)$. On définit un troisième polynôme qu'on note $w(x)$ de la manière suivante:
\begin{align} \label{uvx''}
w(x)&=\frac{h(x)-v^2(x)}{u(x)}. 
\end{align}
Étant donné que, le polynôme $h(x)$ est unitaire de degré $2g+1> \deg(v(x))$ et le polynôme $u(x)$ est unitaire de degré $g$ alors $w(x)$ est un polynôme unitaire de degré $g+1$. \\
Le choix de ces trois polynômes 
 est judicieux pour représenter $g$ points génériques de la courbe hyperelliptique $\mathcal{C}$ et pour établir le système intégrable. Avant d'aller plus loin nous tenons à attirer l'attention du lecteur sur le fait que Mumford a choisi $g$ points $(p_i)_{1 \leqslant i \leqslant g}$ avec des restrictions moins rigides, ces restrictions sont présentées dans l'appendice (page \pageref{ap}). \\

\bigskip

On réécrit les trois polynômes $u(x)$, $v(x)$ et $w(x)$ sous la forme suivante: 
\begin{align}\label{uvx'''}
u(x)&=x^{g}+\sum\limits_{i=1}^{g-1}x^iu_i, \\
v(x)&=\sum\limits_{i=1}^{g-1}x^iv_i, \\
w(x)&=x^{g+1}+\sum\limits_{i=1}^{g}x^iw_i. 
\end{align}
En développant les équations (\ref{uvx}) et (\ref{uvx''}) des polynômes $u(x)$, $v(x)$ et $w(x)$ puis en identifiant les coefficients des puissances de $x$ avec (\ref{uvx'''}), on obtient que 
les coefficients $(u_j)_{0 \leqslant j \leqslant g-1}$ sont polynomiaux en fonction de $(x_i)_{1 \leqslant i \leqslant g}$
, les $(v_j)_{0 \leqslant j \leqslant g-1}$ sont rationnels en fonction de $(y_i)_{1 \leqslant i \leqslant g}$ et $(x_i)_{1 \leqslant i \leqslant g}$ 
, les $(w_j)_{0 \leqslant j \leqslant g}$ sont rationnels en fonction de $(x_i, y_i)_{1 \leqslant i \leqslant g}$ et $(h_i)_{0 \leqslant i \leqslant 2g} $. \\
On note par $M^{\mathcal{C}} $ $$M^{\mathcal{C}}=\{((u_i)_{0 \leqslant i \leqslant g-1}, (v_i)_{0 \leqslant i \leqslant g-1}), (w_i)_{0 \leqslant i \leqslant g-1})\text{ tel que } u(x)w(x)-v^2(x)-h(x)=0\}, $$

Mumford a introduit la dérivée $\frac{d}{dt}$, cette dérivée est étroitement liée à la variation des $g$ points qui définissent $u(x)$, $v(x)$ et $w(x)$ 
. Mumford \cite[page 3. 42]{Mum} a défini la dérivation $\frac{d}{dt}$ en suivant un flot tel que $\frac{dh}{dt} = 0 $. De plus l'action de dérivée $\frac{d}{dt}$ sur $u(x)$ au point $(x', y')$ comme il suit:
\begin{equation}\label{m2}{ \frac{du(x)}{dt}\Big\rvert _{(x', y')}=\frac{u(x)v(x')-v(x)u(x')}{x-x'}}, \end{equation}
On sais que $h(x)$ est un polynôme fixé et est un invariant pour la dérivée $ \frac{d}{dt}$, par conséquent 
l'action de dérivée $\frac{d}{dt}$ sur l'équation $h(x)-v^2(x)=u(x)w(x)$ au point $(x', y')$ donne: 
\begin{equation}\label{m1}
-2v(x)\frac{dv(x)}{dt}\Big\rvert _{(x', y')}=w(x) \frac{du(x)}{dt}\Big\rvert _{(x', y')} +u(x)\frac{dw(x)}{dt}\Big\rvert _{(x', y')}, 
\end{equation}
en remplaçant l'équation (\ref{m2}) dans (\ref{m1}) on obtient:
\begin{align}\nonumber
-2v(x)\frac{dv(x)}{dt}\Big\rvert _{(x', y')}=w(x)\displaystyle{\ \frac{u(x)v(x')-v(x)u(x')}{x-x'} +u(x)\frac{dw(x)}{dt}\Big\rvert _{(x', y')}}, \\\label{m3}
2v(x)\left[\displaystyle{\frac{dv(x)}{dt}\Big\rvert _{(x', y')}- \frac{w(x)u(x')}{x-x'}}\right]+u(x)\left[\displaystyle{\frac{dw(x)}{dt}\Big\rvert _{(x', y')}+ \frac{w(x)v(x')}{x-x'}}\right]=0. 
\end{align}
{Les polynômes $v(x)$ et $u(x)$ sont premiers entre eux sur un ouvert dense dans $M^{\mathcal{C}}$ alors pour que le côté gauche de l'égalité (\ref{m3}) s'annule, il faut que:}
\begin{equation}\label{m4}
\begin{array}{l}
\displaystyle{\frac{dv(x)}{dt}\Big\rvert _{(x', y')}- 
\frac{w(x)u(x')}{x-x'}}=-\frac{1}{2}u(x)a(x, x'), \\
\displaystyle{\frac{dw(x)}{dt}\Big\rvert _{(x', y')}+ \frac{w(x)v(x')}{x-x'}}=v(x)a(x, x'). \\
\end{array}
\end{equation}
On peut écrire $a(x, x')= \tilde{a}(x, x') +\frac{w(x')}{x-x'}$. En remplaçant $a(x, x')$ sous cette forme dans (\ref{m4}), on obtient:
\begin{align}\label{m6}
\frac{dv(x)}{dt}\Big\rvert _{(x', y')}= \frac{1}{2}[ \frac{w(x)u(x')-w(x')u(x)}{x-x'}-u(x). \tilde{a}(x, x')], \\ \label{m7}
\frac{dw(x)}{dt}\Big\rvert _{(x', y')}= \frac{-w(x)v(x')+w(x')v(x)}{x-x'}+v(x). \tilde{a}(x, x'). 
\end{align}
On sait que $\deg(v)<g$ par conséquent $\deg(\frac{dv(x)}{dt}\Big\rvert _{(x', y')})<g$ en $x'$, donc le côté de droite de l'égalité (\ref{m6}) doit être de degré au plus $g-1$ en ${x'}$, afin d'annuler le coefficient dominant de degré $g$ il faut choisir $\tilde{a}(x, x')=u(x')$, par conséquent les équations (\ref{m6}) et (\ref{m7}) deviennent:
\begin{equation}\label{m11} \begin{array}{l}
\displaystyle{\frac{dv(x)}{dt}\Big\rvert _{(x', y')}= \frac{1}{2}\left[ \frac{w(x)u(x')-w(x')u(x)}{x-x'}-u(x). u(x')\right]}, \\
\frac{dw(x)}{dt}\Big\rvert _{(x', y')}= \displaystyle{\frac{-w(x)v(x')+w(x')v(x')}{x-x'}}+v(x). u(x'). 
\end{array}
\end{equation}
La dérivation $\frac{d}{dt}$ est définie sur la courbe hyperelliptique $\mathcal{C}$, et est caractérisée par son action sur les trois polynômes $u(x)$, $v(x)$ et $w(x)$. La derivation $\frac{d}{dt}$ est donnée par les équations (\ref{m2}) et (\ref{m11}), définit un système dynamique, qu'on appelle le système de Mumford d'ordre $g$. Plus loin dans la partie \ref{strati12} de la section \ref{section2} , on montrera que ce système est un système intégrable. 

\bigskip

Malheureusement, nous ne comprenons pas comment Mumford a détermine le flot qui définit la derivation $\frac{d}{dt}$. 
Cependant dans cette partie, nous allons exposer la structure de Poisson introduite par Vanhaecke 
 qui convient parfaitement à une situation. Cette structure de Poisson vient à notre rescousse et nous permet de définir les hamiltoniens des systèmes de Mumford, mais cela a un prix, malgré le fait que le crochet de Poisson résultant d'un calcul facile en principe, il est plutôt 
 complexe en pratique (voir remarque \ref{rem'} )! \\

\subsection{}\label{Meth2} Vanhaecke a commencé par définir une structure de Poisson fonctions coordonnées $((x_i, y_i))_{1 \leqslant i \leqslant g}$ sur la variété $(\mathbb{C}^{2}) ^g $ comme il suit:
\begin{equation}\label{4}
\begin{array}{c}
 \{y_i, x_j\}=-\{x_j, y_i\}=1, \text {\, \, \, } \{x_i, x_j\}=\{y_i, y_j\}=0, \\
 \{y_i, x_j\}= \left\{\begin{array}{cl}
1 & \text{\, if $i=j$, }\\
0 &\text{\, otherwise. }
\end{array}\right. \\
\end{array}
\end{equation}
Le $g$-uplet des fonctions coordonnées $((x_i, y_i))_{1 \leqslant i \leqslant g}$ de $ (\mathbb{C}^{2}) ^g$ définissent deux polynômes 
\begin{equation}\label{u, v} 
\begin{array}{ll}
&u(x)=\prod\limits_{i=1}^g(x-x_i)=x^{g}+\sum\limits_{i=0}^{g-1}u_ix^i, \\ et& \\
&v(x)=\sum\limits_{i=1}^gy_i\prod\limits_{\begin{array}{c}
i=1\\
 i\neq j\end{array}
}^g\frac{x-x_j}{x_i-x_j}= \sum\limits_{i=0}^{g-1}v_ix^i.
\end{array} \end{equation}
Les coefficients du polynôme $u(x)$ sont des fonctions polynômes en les fonctions coordonnées de $(x_i)_{1 \leqslant i \leqslant g}$, Les coefficients du polynôme $v(x)$ sont rationnels en les fonctions coordonnées $(x_i, y_i)_{1 \leqslant i \leqslant g}$. \\
Dans le lemme \ref{Pois1}, nous allons détailler les crochets de Poisson des fonctions $(u_i, v_i)_{0 \leqslant i \leqslant g-1}$, qui sont assez compliqués, mais avant nous introduisons quelques notations:

\medskip

Soit $\frac{f(x)}{g(x)}$ une fonction rationnelle. On note par $\left [\frac{f(x)}{g(x)}\right]_+$ sa partie polynomiale de $\frac{f(x)}{g(x)}$ et on note par $\left [\frac{f(x)}{g(x)}\right]_-=\frac{f(x)}{g(x)}-\left [\frac{f(x)}{g(x)}\right]_+$ sa partie rationelle. Avec ces notations on a $$f(x) \mod g(x)= g(x) \left [\frac{f(x)}{g(x)}\right]_-. $$

\begin{lem}\label{Pois1}
Soit la structure de Poisson $\{\cdot, \cdot\}$ définie par les équations (\ref{4}). Les crochets de Poisson des fonctions $(u_i, v_i)_{0 \leqslant i \leqslant g-1}$ sont donnés sous la forme canonique suivante:
\begin{equation}\label{4'}
\begin{array}{l}
 \{u(x), u_j\}= \{v(x), v_j\}=0, \text {\ pour tout, \, \, } {0 \leqslant j \leqslant g-1}, \\
 \{u(x), v_j\}= -\{v(x), u_j\}=\left[ \frac{u(x)}{x^{j+1}} \right]_+. 
 \end{array}
\end{equation}

\end{lem}
\begin{proof}[Preuve]
Les polynômes $u(x)$ et $v(x)$ s'écrivent de deux manières suivantes:
\begin{align}\label{uv}
u(x)=\prod\limits_{i=1}^g(x-x_i)&\; \; \; \;, \; \;\; \; \; v(x)=\sum\limits_{i=1}^{g}y_i\prod\limits_{j=1, j \neq i}^g \frac{ x-x_j}{x_i-x_j}, \\ \label{uuvv}
u(x)=x^g+\sum\limits_{i=0}^{g-1}{u_ix^i}& \; \; \; \;, \; \;\; \; \;v(x)=\sum\limits_{i=0}^{g-1}{v_ix^i}. 
\end{align}
En développant le polynôme $u(x) $ (resp. $v(x)$) donné par (\ref{uv}) puis en identifiant les coefficients du polynôme de $u(x)$ (resp. $v(x)$) donnés par (\ref{uuvv}), 
{on obtient: }
\begin{equation*}
\begin{array}{lcl}
u_{g-j}=(-1)^{j}\sum\limits_{I \in N_{j}} \prod\limits_{t\in I} x_{t}&, & v_{g-j}=(-1)^{j}\sum\limits_{i=1}^{g}y_i \sum\limits_{I \in N^i_{j}} \prod\limits_{t\in I} \frac{ x_{t} }{x_i-x_t}. 
\end{array}
\end{equation*}
Où pour tout $0<j<g$, on note par $N_j$ (resp. $N^i_j$) l'ensemble de tous les 
sous-ensembles de $j$ éléments de $\{1, 2, 3, \cdots, g\}$ (resp. $\{1, 2, 3, \cdots, \widehat{i}, \cdots, g\}$ où $i$ est omis.). \\

\medskip
Le crochet de $\{u_k, u_j\}=0, \text {\ pour tout, \, \, } 0 \leqslant j, k \leqslant g-1$ car les $(u_j)_{0 \leqslant j, k \leqslant g-1}$ sont des polynômes en fonctions des coordonnées $(x_i)_{0 \leqslant i \leqslant g-1}$, et par définition $\{x_i, x_j\}=0$, donc 
\begin{equation}\label{u1} \{u(x), u_j\}=0, \text {\ pour tout, \, \, } {0 \leqslant j \leqslant g-1}\end{equation}

\medskip

Calculons maintenant le crochet de Poisson $\{v(x), v(y)\}$

\begin{align} \nonumber
\{v(x), v(t)\}&=\{\sum\limits_{i=1}^{g}y_i\prod\limits_{h=1, h \neq i}^g \frac{ x-x_h}{x_i-x_h}, \sum\limits_{k=1}^{g}y_k\prod\limits_{l=1, l\neq k}^g \frac{ t-x_l}{x_k-x_l}\}\\ \label{inv}
&= \sum\limits_{i=1}^{g}\sum\limits_{k=1}^{g}y_k\prod\limits_{h=1, h \neq i}^g \frac{ t-x_h}{x_i-x_h}\{y_i, \prod\limits_{l=1, l\neq k}^g \frac{ x-x_l}{x_k-x_l}\}- \sum\limits_{k=1}^{g}\sum\limits_{i=1}^{g}y_i\prod\limits_{h=1, h \neq k}^g \frac{ t-x_h}{x_k-x_h}\{y_k, \prod\limits_{l=1, l\neq i}^g \frac{ t-x_l}{x_i-x_l}\}
\end{align}
Remarquez que les deux parties composant la différence de l'équation (\ref{inv}) sont symétriques en $x, y$ et $k, i$, par conséquent on restreint le calcul à la première partie de la différence de l'équation (\ref{inv}) et on retrouve le calcul de la seconde partie par symétrie. \\
 On a 
\begin{align} \label{inv1}
\{y_i, \prod\limits_{l=1, l\neq k}^g \frac{ t-x_l}{x_k-x_l}\}=\left\{ \begin{array}{ll}
\prod\limits_{l=1, l\neq i}^g \frac{ t-x_l}{x_i-x_l}\sum\limits_{l\neq i}\frac{-1}{x_i-x_l} & \text { lorsque } i=k, \\
\prod\limits_{l=1, l\neq k}^g \frac{ t-x_l}{x_k-x_l}\frac{t-x_k}{(x_k-x_i)^2} & \text { lorsque } i\neq k, 
\end{array}\right. 
\end{align}
\\
Les équations (\ref{inv1}) sont des polynômes de degré au plus $g-1$; alors en substituant le crochet $\{y_i, \prod\limits_{l=1, l\neq k}^g \frac{ t-x_l}{x_k-x_l}\}$ donné par (\ref{inv1}) dans (\ref{inv}) le crochet $\{v(x), v(t)\}$
. Les égalités (\ref{inv1}) impliquent que $\{v(x), v(t)\}$ s'annule en $2g$ points suivants $(x_i, x_k)_{1 \leqslant i \neq k \leqslant g}$, mais comme $\{v(x), v(t)\}$ est un polynôme de degré au plus $2(g-1)$ en $(x, t)$, alors \begin{equation} \label{v1}\{v(x), v(t)\}=0. \end{equation}
De \ref{u1} et \ref{v1} on a prouvé la première égalité de (\ref{4'}).

\medskip

Il ne reste plus qu'à calculer le crochet de Poisson $\{u_{g-j}, v(x)\}$. 
\begin{align} \nonumber
\{u_{g-j}, v(x)\}&=\{(-1)^{j}\sum\limits_{I \in N_{j}} \prod\limits_{t\in I} x_{t}, \sum\limits_{l=1}^{g}y_l\prod\limits_{h=1, h \neq l}^g \frac{ x-x_h}{x_l-x_h}\}, \\ \nonumber
&=(-1)^{j} \sum\limits_{I \in N_{j}}\sum\limits_{l=1}^{g} \{ \prod\limits_{t\in I} x_{t}, y_l\}\prod\limits_{h=1, h \neq l}^g \frac{ x-x_h}{x_l-x_h}, \\ \label{rep}
&= (-1)^{j-1}\sum\limits_{l=1}^{g}\sum\limits_{I \in N^l_{j-1}}\prod\limits_{t\in I} x_{t}\prod\limits_{h=1, h \neq l}^g \frac{ x-x_h}{x_l-x_h}. 
\end{align}
Remarquons que 
\begin{align*}
\sum\limits_{I \in N^l_{j-1}}\prod\limits_{t\in I} x_{t}&= \sum\limits_{k=0}^{j-1}(-x_l)^k\sum\limits_{I \in N_{k}}\prod\limits_{t\in I} x_{t}, \\
&=(-1)^{j-1}\sum\limits_{k=0}^{j-1}(-1)^{k +j-1} x_l^k\sum\limits_{I \in N_{k}}\prod\limits_{t\in I} x_{t}, \\
&=(-1)^{j-1}\sum\limits_{k=0}^{j-1}x_l^{k }u_{g-j+1+k}. 
\end{align*}
En remplaçant cette dernière égalité dans (\ref{rep}), on obtient 
\begin{align} \nonumber
\{u_{g-j}, v(x)\}
&= \sum\limits_{l=1}^{g}\sum\limits_{k=0}^{j-1}x_l^{k }u_{g-j+1+k}\prod\limits_{h=1, h \neq l}^g \frac{ x-x_h}{x_l-x_h}, 
\end{align}
On a l'égalité suivante \begin{align}\label{gea}\sum\limits_{l=1}^{g}\sum\limits_{k=0}^{j-1}x_l^{k }u_{g-j+1+k}\prod\limits_{h=1, h \neq l}^g \frac{ x-x_h}{x_l-x_h}= \left[\frac{u(x)}{x^{g-j+1}}\right]_+. \end{align}
 parce que les deux parties de l'égalité (\ref{gea}) sont deux polynômes de degré au plus $g-1$ et leurs évaluations en $g$ points $(x_i)_{0 \leqslant i \leqslant g}$ sont égales. 
Rappelons si deux polynômes de degré au plus $g-1$ sont égaux en $g$ points alors
les deux polynômes sont identiques
, par conséquent l'égalité (\ref{gea}) est correcte. 
D'où 
\begin{align*}
\{u_{g-j}, v(x)\}&= \left[ \frac{u(x)}{x^{g-j+1}} \right]_+, 
\end{align*}
en faisant le changement de variable $i=g-j$ on retrouve bien la deuxième égalité des équations (\ref{4'}), ce qu'il fallait démontrer.
\end{proof}

\bigskip
 Les polynômes $u(x)$ et $v(x)$ présentés par Vanhaecke sont des polynômes en fonction des fonctions coordonnées $(u_i)_{0 \leqslant i \leqslant g-1 } $ et $(v_i)_{0 \leqslant i \leqslant g-1 }$ respectivement. Afin de définir un troisième polynôme $w(x)$, on choisit un polynôme quelconque $f(x)\in\mathbb{C}[x]$ et on définit un polynôme $F(x, y)=y^2-f(x)$. Le polynôme $F(x, y)$ est appelé hyperelliptique car $F(x, y)=0$ est l'équation d'une courbe hyperelliptique. On définit deux polynômes 
\begin{equation}\label{dure}H(u(x), v(x))=F(x, v(x))\mod u(x) \text{ \, et\, } w(x)= -\left[ \frac{F(x, v(x))}{u(x)} \right]_+. 
\end{equation}
Nous allons écrire explicitement les actions de $u(x)$ et $v(x)$ sur $w(x)$ par le crochet de Poisson:
\begin{align}\nonumber
\{u_i, w(x)\}&= \{u_i, -\left[ \frac{F(x, v(x))}{u(x)} \right]_+\}, \\ \nonumber
&=\left[ \frac{\{u_i, -F(x, v(x))\}}{u(x)} \right]_+, \\ \nonumber
&=\left[ \frac{\{u_i, - v(x)^2\}}{u(x)} \right]_+, \\ \label{5'}
&=2\left[ \frac{-v(x)\left[ \frac{u(x)}{x^{i+1}} \right]_+}{u(x)} \right]_+, 
\end{align}
et 
\begin{align} \nonumber
\{v_i, w(x)\}&= \{v_i, -\left[\frac{F(x, v(x))}{u(x)} \right]_+\}, \\ \nonumber
&=\left[ \frac{\{v_i, u(x)\} F(x, v(x))}{u^2(x)} \right]_+, \\ \nonumber
&=\left[ \frac{\left[ \frac{u(x)}{x^{i+1}} \right]_+ F(x, v(x))}{u(x)^2} \right]_+, \\ \label{6'}
&=\left[- \frac{\left[ \frac{u(x)}{x^{i+1}} \right]_+ }{u(x)}w(x) \right]_+. 
\end{align}


\begin{coro}\label{idc}
Les crochets de Poisson de $u(z)$, $v(z)$ et $w(z)$ avec $H(u(x), v(x))$ sont égaux à
\begin{equation}\label{huvw}
\begin{array}{ll}
\{u(z), H(u(x), v(x))\}&=2\sum\limits_{i=0}^{g-1}x^{i} \left(v(z)\left[ \frac{u(z)}{z^{i+1}} \right]_+ - u(z)\left[\frac{v(z)}{u(z)}\left[ \frac{u(z)}{z^{i+1}} \right]_+ \right]_+\right), \\
\{v(z), H(u(x), v(x))\}&=\sum\limits_{i=0}^{g-1}-x^i \left({w(z)}\left[ \frac{u(z)}{z^{i+1}} \right]_+- \left[\frac{w(z)}{u(z)}\left[ \frac{u(z)}{z^{i+1}} \right]_+\right]_+\right), 
\\
\{w(z), H(u(x), v(x))\}&=2\sum\limits_{i=0}^{g-1}x^i \left(w(z) \left[\frac{v(z)}{u(z)}\left[ \frac{u(z)}{z^{i+1}} \right]_+\right]_+- v(z)\left[\frac{w(z)}{u(z)}\left[ \frac{u(z)}{z^{i+1}} \right]_+\right]_+\right).
\end{array} 
\end{equation}
\end{coro}
\begin{proof}[Preuve]
Commençons par calculer le crochet $\{u(z), H(u(x), v(x))\}$
\begin{align}\nonumber
\{u(z), H(u(x), v(x))\}&=\sum\limits_{j=0}^{g-1}\{u(z), v_j\}\frac{\partial H(u(x), v(x))}{\partial v_j}, \\ \nonumber
&=\sum\limits_{j=0}^{g-1}\{u(z), v_j\}\frac{\partial H(u(x), v(x))}{\partial v_j}, \\ \nonumber
&=\sum\limits_{j=0}^{g-1} \left[ \frac{u(z))}{z^{j+1}} \right]_+\frac{\partial H(u(x), v(x))}{\partial v_j}\\\nonumber
&=\sum\limits_{j=0}^{g-1} \sum\limits_{k=0}^{g-j-1} z^{g-j-k-1}u_{g-k} \frac{\partial F(x, v(x))}{\partial y} x^j\mod u(x)\\\nonumber
&=\sum\limits_{l=1}^{g} \sum\limits_{j=0}^{g-l} z^{l-1}u_{j+l}\frac{\partial F(x, v(x))}{\partial y} x^j\mod u(x)\\\nonumber
&=\sum\limits_{l=1}^{g} \sum\limits_{j=0}^{g-l} z^{l-1}u_{j+l}\frac{\partial F(x, v(x))}{\partial y} x^j\mod u(x)\\ \label{hu}
&=\sum\limits_{i=0}^{g-1}z^{i} 2[v(x)\left[ \frac{u(x)}{x^{i+1}} \right]_+ - u(x)\left[\frac{v(x)}{u(x)}\left[ \frac{u(x)}{x^{i+1}} \right]_+ \right]_+]
\end{align}
Nous déterminons le crochet $\{v(z), H(u(x), v(x))\}$:
\begin{align}\label{+=}
\{v(z), H(u(x), v(x))\}&=\sum\limits_{j=0}^{g-1}\{v(z), u_j\}\frac{\partial H(u(x), v(x))}{\partial u_j}, 
\end{align}
pour alléger les équations nous allons calculer $\frac{\partial H(u(x), v(x))}{\partial u_j}$ à part:
\begin{align*}
\frac{\partial H(u(x), v(x))}{\partial u_j}&=\partial \frac{F(x, v(x))\mod u(x)}{\partial u_j}, \\
&= \frac{\partial}{\partial u_j} [u(x)\left[\frac{F(x, v(x))}{u(x)}\right]_- ], \\
&=\frac{\partial}{\partial u_j} [F(x, v(x))-u(x)\left[\frac{F(x, v(x))}{u(x)}\right]_+], \\
&=\frac{\partial}{\partial u_j} [-u(x)\left[\frac{F(x, v(x))}{u(x)}\right]_+], \\
&=-x^i\left[\frac{F(x, v(x))}{u(x)}\right]_+-u(x)\left[\frac{-x^i}{u(x)}\frac{F(x, v(x))}{u(x)}\right]_+, \\
&=-u(x)[\frac{x^i}{u(x)}\left[\frac{F(x, v(x))}{u(x)}\right]_+-\left[\frac{x^i}{u(x)}\frac{F(x, v(x))}{u(x)}\right]_+], \\
&=-x^i\left[\frac{F(x, v(x))}{u(x)}\right]_+\mod u(x). 
\end{align*}
En remplaçant cette dernière égalité dans (\ref{+=}), on obtient:
\begin{align} \nonumber 
\{v(z), H(u(x), v(x))\}&=\sum\limits_{j=0}^{g-1}\{v(z), u_j\}[-x^j\left[\frac{F(x, v(x))}{u(x)}\right]_+\mod u(x)], \\ \nonumber 
&=\sum\limits_{i=0}^{g-1}-x^i\left[\frac{F(z, v(z))}{u(z)}\right]_+\left[ \frac{u(z)}{z^{i+1}} \right]_+\mod u(z), \\ \nonumber 
&=\sum\limits_{i=0}^{g-1}-x^i w(z)\left[ \frac{u(z)}{z^{i+1}} \right]_+\mod u(z), \\ \nonumber 
&=\sum\limits_{i=0}^{g-1}-x^i {w(z)}\left[ \frac{u(z)}{z^{i+1}} \right]_+\mod u(z), \\ \label{hv}
&=\sum\limits_{i=0}^{g-1}-x^i ({w(z)}\left[ \frac{u(z)}{z^{i+1}} \right]_+- \left[\frac{w(z)}{u(z)}\left[ \frac{u(z)}{z^{i+1}} \right]_+\right]_+). 
\end{align}
Nous exposons maintenant le développement du crochet $\{w(z), H(u(x), v(x))\}$
\begin{align} \nonumber 
\{w(z), H(u(x), v(x))\}&=\{\left[\frac{F(z, v(z))}{u(z)}\right]_+, H(u(x), v(x))\}, \\ \nonumber 
&=\{\left[\frac{v(z)^2-f(z)}{u(z)}\right]_+, H(u(x), v(x))\}, \\ \nonumber 
&=\left[\frac{2v(z)\{v(z), H(u(z), v(z))\}}{u(x)}\right]_++\left[{F(z, v(z))}\frac{\{u(z), H(u(x), v(x))\}}{{u(z)}^2}\right]_+, \\ \nonumber 
&=2\left[\sum\limits_{i=0}^{g-1}-x^i\frac{v(z)}{u(z)}[{w(z)}\left[ \frac{u(z)}{z^{i+1}} \right]_+-u(z) \left[\frac{w(z)}{u(z)}\left[ \frac{u(z)}{z^{i+1}} \right]_+\right]_+]\right]_+\\ \nonumber 
&+\left[\frac{F(z, v(z))}{u(z)}\frac{\{u(z), H(u(x), v(x))\}}{{u(z)}}\right]_+, \\ \nonumber 
&=2\left[\sum\limits_{i=0}^{g-1}-x^i\frac{v(z)}{u(z)} [{w(z)}\left[ \frac{u(z)}{z^{i+1}} \right]_+ - \left[\frac{w(z)}{u(z)}\left[ \frac{u(z)}{z^{i+1}} \right]_+\right]_+]\right]_+, \\ \nonumber 
&+\left[w(z)\sum\limits_{i=0}^{g-1}2x^{i} \left(\frac{v(z)}{{u(z)}}\left[ \frac{u(z)}{z^{i+1}} \right]_+ -\left[\frac{v(z)}{u(z)}\left[ \frac{u(z)}{z^{i+1}} \right]_+ \right]_+\right)\right]_+, \\ \label{hw}
&=2\sum\limits_{i=0}^{g-1}x^i [ w(z) \left[\frac{v(z)}{u(z)}\left[ \frac{u(z)}{z^{i+1}} \right]_+\right]_+- v(z)\left[\frac{w(z)}{u(z)}\left[ \frac{u(z)}{z^{i+1}} \right]_+\right]_+]. 
\end{align}

le calcul des crochets de $ \{u(z), H(u(x), v(x))\}$, $ \{v(z), H(u(x), v(x))\}$ et $ \{w(z), H(u(x), v(x))\} $ nous donne (\ref{hu}) , (\ref{hv}) et (\ref{hw}) respectivement. Ces résultats sont identiques aux égalités (\ref{huvw}) ce qu'il fallait démontrer.
\end{proof}
\begin{rem}\label{rem'}
Les derniers $g$ coefficients du polynôme $F(x, v(x)) $ sont indépendants car le coefficient $v_i$ de $v(x)$ n'apparaît que sur le coefficient de $(x^j) _{i \leqslant j \leqslant g-1 }$ de $F(x, v(x))$, alors les coefficients $(h_i)_{ 0 \leqslant i \leqslant g-1}$ du polynôme $H(u(x), v(x))=F(x, v(x))\mod u(x) $ sont l'indépendants. 

\end{rem}
Soient les polynômes $F(x, y)$ et $F'(x, y)$ avec
$$F(x, y)= F'(x, y)+c(x), $$
où $c(x)$ qui est un polynôme indépendant de $y$ et de degré inférieur à $g-1$ dans $x$, alors
$$F(x, y) \mod u(x)= F'(x, y) \mod u(x)+c(x), $$
alors le crochet de Poisson $\{\cdot, c(x)\}=0$, car le fix polynôme $c(x)$ est indépendant des polynômes $v(x), u(x)$. \\ 

Avant de progresser dans notre étude nous allons donner deux exemples du calcule des polynômes $w$ et $H$ pour $g=1$. \\
Choisissons $F(x, y)=y^2$ c'est à dire $f(x)=0$. On a par définition 
\begin{equation*}
\begin{array}{cl c cl}
w(x)&=\left[ \frac{F(x, v(x))}{u(x)}\right]_+&, \, & H(u(x), v(x))&=\left[ \frac{F(x, v(x))}{u(x)}\right]_-, \\
&=\left[ \frac{v^2(x)}{u(x)} \right]_+&, \, & &=u(x)\left[\frac{F(x, v(x))}{u(x)}-\left[ \frac{F(x, v(x))}{u(x)}\right]_+\right], \\
&=\left[ \frac{v_1^2x^2+v_1v_0x+v^2_0}{x^2+u_1x+u_0} \right]_+ &, \, & &=F(x, v(x))-u(x)\left[ \frac{F(x, v(x))}{u(x)}\right]_+
\end{array}
\end{equation*}
par la division euclidienne on obtient 
\begin{equation*}
\begin{array}{cl c cl}
w(x)&=v_1^2 &, \, & H(x)&=(2v_1v_0-u_1v_1^2)x+v^2_0-u_0v_1^2
\end{array}
\end{equation*}
Maintenant choisissons $f(x)=-x^3$ on a $F(x, y)=y^2+x^3$ toujours en utilisant la division euclidien on a 
$$\begin{array}{cl c cl}
w(x)&=x-v_1^2-u_1 &, \, & H(u(x), v(x))&=[2v_1v_0+u_0-u_1(v_1^2-u_1)]x+v^2_0-(v_1^2-u_1)u_0. 
\end{array}$$
Ces deux exemples $f(x)=0$ et $f(x)=-x^3$ on constate que pour écrire les polynômes $w$ et $H$ demande un effort, alors pour détourner cette difficulté et homogénesie l'écriture des polynômes $w$ et $H$ nous allons imposer une condition au polynôme $f$ incarnée par l'égalité (\ref{huv}) qu'on verra plus loin. 
\medskip

Rappelons la définition d'un système intégrable:
\begin{defi} \label{defi_sys} 
Soit $(V, \{\cdot, \cdot\})$ un espace affine de dimension $d$, muni d'une structure de Poisson de rang $2r$. Soit
$\mathbf{F}=(F_1, \dots, F_s)$ une famille de fonctions de $\mathcal{F}(V)$. Le triplet $(V, 
\{\cdot, \cdot\}, \mathbf{F}) $ est un système intégrable (au sens de Liouville) de rang $2r$ s'il satisfait les trois conditions suivantes:\\ 
$\bullet$ $s=d-r$, \\
$\bullet$ $\mathbf{F}$ est involutive, (i.e pour tous $F, G \in \mathbf{F}$ on a $\{F, G\}=0$)\\
$\bullet$ $ \mathbf{F}$ est indépendante (i.e les champs de vecteurs associés aux fonctions de $ \mathbf{F}$ sont linéairement indépendants sur un ouvert dense). \\
La famille de fonctions $\mathbf{F}=(F_1, \dots, F_s)$ de $\mathbb{C}(V)$, est vue comme l'application $\mathbf{F}: V
\longrightarrow \mathbb{C}^s$. 
\end{defi}

\begin{theo}\label{in}
Soit $M_{g, f}$ l'ensemble des triplets de polynômes $(u(x), v(x), w(x))$ de dimension $3g+1$, et $H(u(x), v(x))=\sum\limits_{i=0}^{g-1}h_ix^i$. Le triplet $(M_{g, f}, \{\cdot, \cdot\}, H) $ est un système intégrable (au sens de Liouville) de rang $2g$. 
\end{theo}
\begin{proof}[Preuve]
Par construction, la structure de Poisson définie par (\ref{4}) est de rang $2g$. Comme on l'a vu dans la remarque \ref{rem'} les coefficients $(h_i)_{ 0 \leqslant i \leqslant g-1}$ sont linéairement indépendants car la fonction coefficient $v_i$ de $v(x)$ n'apparaît que sur le coefficient de $(x^j) _{i \leqslant j \leqslant g-1 }$ et par la construction du crochet de Poisson on a que les champs de vecteurs $(\{\cdot, h_i\})_{ 0 \leqslant i \leqslant g-1}$ sont linéairement independents sur un ouvert dense. Une autre preuve de l'independence linaire des champs de vecteurs $(\{\cdot, h_i\})_{ 0 \leqslant i \leqslant g-1}$, se trouve plus bas dans le théorème \ref{prop1}.
Pour montrer que la famille $(h_i)_{0 \leqslant i \leqslant g-1}$ est involutive, il suffit de montrer que $\{H(u(x), v(x)), h_i\}=0$ pour tout $0 \leqslant i \leqslant g-1 $. 
\begin{align*}
\{H(u(x), v(x)), h_i\}&=\{u(x)\left[ \frac{F(x, v(x))}{u(x)} \right]_-, h_i\}, \\
&=\{u(x), h_i\}\left[ \frac{F(x, v(x))}{u(x)} \right]_-
+u(x)(\left[ \frac{\{F(x, v(x)), h_i\}}{u(x)}-\frac{\{u(x), h_i\}F(x, v(x))}{u^2(x)} \right]_-), \\
&=2\left(v(x)\left[ \frac{u(x)}{x^{i+1}} \right]_+ - u(x)\left[\frac{v(x)}{u(x)}\left[ \frac{u(x)}{x^{i+1}} \right]_+ \right]_+\right)\left[ \frac{F(x, v(x))}{u(x)} \right]_-\\
&+u(x)(\left[ \frac{\{F(x, v(x)), h_i\}}{u(x)}-\frac{ \left(v(x)\left[ \frac{u(x)}{x^{i+1}} \right]_+ - u(x)\left[\frac{v(x)}{u(x)}\left[ \frac{u(x)}{x^{i+1}} \right]_+ \right]_+\right)F(x, v(x))}{u^2(x)} \right]_-), \\
&=2u(x)\left(\frac{v(x)}{u(x)}\left[ \frac{u(x)}{x^{i+1}} \right]_+ - \left[\frac{v(x)}{u(x)}\left[ \frac{u(x)}{x^{i+1}} \right]_+ \right]_+\right)\left[ \frac{F(x, v(x))}{u(x)} \right]_-\\
&+u(x)(\left[ 2v(x) \left[ \left[\frac{u(x)}{x^{i+1}}\right]_+\frac{F(x, v(x))}{u(x)}\right]_--2\left(\frac{v(x)}{u(x)}\left[ \frac{u(x)}{x^{i+1}} \right]_+ - \left[\frac{v(x)}{u(x)}\left[ \frac{u(x)}{x^{i+1}} \right]_+ \right]_+\right) \frac{F(x, v(x))}{u(x)} \right]_-), 
\end{align*}
\text{ en annulant les parties polynomiales se trouvant entre les crochets $[\frac{\, \star \, }{\, \star \, }]_-$, on obtient: }
\begin{align*}
\{H(u(x), v(x)), h_i\}&=2u(x)\left(\frac{v(x)}{u(x)}\left[ \frac{u(x)}{x^{i+1}} \right]_+ - \left[\frac{v(x)}{u(x)}\left[ \frac{u(x)}{x^{i+1}} \right]_+ \right]_+\right)\left[ \frac{F(x, v(x))}{u(x)} \right]_-\\
&+u(x)\left[-2\left(\frac{v(x)}{u(x)}\left[ \frac{u(x)}{x^{i+1}} \right]_+ - \left[\frac{v(x)}{u(x)}\left[ \frac{u(x)}{x^{i+1}} \right]_+ \right]_+\right) \frac{F(x, v(x))}{u(x)} \right]_-, \\
&=2u(x)\left(\frac{v(x)}{u(x)}\left[ \frac{u(x)}{x^{i+1}} \right]_+ - \left[\frac{v(x)}{u(x)}\left[ \frac{u(x)}{x^{i+1}} \right]_+ \right]_+\right)\left[ \frac{F(x, v(x))}{u(x)} \right]_+\\
&+u(x)\left[-2\left(\frac{v(x)}{u(x)}\left[ \frac{u(x)}{x^{i+1}} \right]_+ - \left[\frac{v(x)}{u(x)}\left[ \frac{u(x)}{x^{i+1}} \right]_+ \right]_+\right) \frac{F(x, v(x))}{u(x)} \right]_+, \\
&=0. 
\end{align*}
On a donc $(\{H(u(x), v(x)), h_i\}= \sum\limits_{j=0}^{2g+1}x^j\{h_j, h_i\} =0)_{0 \leqslant i \leqslant g-1}$. Comme $x$ est uns variable formelle alors les coefficients du polynôme $\{H(u(x), v(x)), h_i\}$ sont tous nuls, par consequent $\{h_j, h_i\} =0)_{0 \leqslant i, j \leqslant g-1}$ et on conclut que la famille $(h_i)_{0 \leqslant i \leqslant g-1}$ est involutive.
\end{proof}
Vanhaecke a réinterprété le système intégrable $(M_{g, f}, \{\cdot, \cdot\}, H)$ avec une écriture plus compacte. 
Soit $f(x)\in \mathbb{C}[x]_{2g+1}^1$, alors le polynôme $w(x)$ de $M_{g, f}$ est un polynôme unitaire de degré $g+1$. 
l'espace affine noté $M_{{g, f}}$ de coordonnées $(u_i, v_j, w_k)$ avec $ 0 \leqslant i, j \leqslant g-1$ et $ 0 \leqslant k \leqslant g$ et de dimension $3g+1$. L'espace $M_{{g, f}}$ admet la structure de Poisson $\{\cdot, \cdot\}$, définie par (\ref{4'}), (\ref{5'}), et (\ref{6'}). 
Vanhaecke \cite{Pol} a présenté l'espace $M_{g, f}$ sous la forme d'un sous espace de $\mathfrak{sl}(\mathbb{C}[x])$ de la façon suivante:
\begin{equation} \label{Mg} M_{g, f}:=\left\lbrace \left(\begin{array}{cc}
v(x) & u(x) \\
w(x) & -v(x)
\end{array} \right) 
| \;u(x) \in \mathbb{C}_g^1[x], \, v (x) \in \mathbb{C}_{g-1}[x], \, w(x) \in \mathbb{C}[x]_{2g+1}^1 
\right\rbrace \simeq\mathbb{C}^{3g+1}. 
\end{equation}
%

Soit $A(x)= \left(\begin{array}{cc}
v(x) & u(x) \\
w(x) & -v(x)
\end{array} \right)\in M_{g, f}$ on a 
\begin{align}\nonumber
\det(A(x)-y\id)&=y^2-v^2(x)-u(x)w(x), \\ \nonumber
&=y^2-v^2(x)-u(x)\left[\frac{v^2(x)-f(x)}{u(x)}\right]_+, \\ \nonumber
&=y^2+f(x)+u(x)\left[\frac{v^2(x)-f(x)}{u(x)}\right]_-, \\ \label{c2}
&=y^2+f(x)+H(u(x), v(x)). 
\end{align}
De l'égalité (\ref{c2}) implique que $ \det(A(x))=f(x)+H(u(x), v(x)). $
Différents polynômes hyperelliptiques ne changent rien à la structure de Poisson. 
Soit $A(x)\in M_{g, f}$ 
 \begin{align}\label{huv} f'(x)&= \det(A(x))+ x^g \left[\frac{\det(A(x))}{x^g}\right]_-, 
 \end{align}
alors $A(x)\in M_{g, f'}$ avec $f'(x)\in \mathbb{C}[x]_{2g+1}^1$. 
On note par $M_g = 
\bigsqcup\limits_
{f \in \mathbb{C}[x]_{2g+1}^1} 
M_{g, f}$. \\

Nous allons réécrire la structure de Poisson de manière plus abrégée à l'aide du corollaire suivant qui est dû à L. Makar-Limanov \cite{L}
\begin{coro}\label{lenny}
Soit deux polynômes $a(x)=\sum\limits_{i=1}^{m}x^ia_i$ et $b(x)=\sum\limits_{i=1}^{n}x^ib_i$, et soit le polynôme 
 $T(x, y)=\frac{a(x)b(y)-a(y)b(x)}{x-y}$. Le coefficient de $y^r$ de $T$ est 
 $$\left[ \frac{b(x)}{x^{u+1}} \right]_+b(x) -\left[ \frac{b(x)}{x^{u+1}} \right]_+a(x). $$
\end{coro}
\begin{proof}[Preuve]
Supposons que $a(x)$ et $b(x)$ sont des monômes $x^r$ et $x^s$. \\
$$\frac{x^ry^s-x^s y^r}{x-y}=
\left\{
\begin{array}{ll}
x^s y^s\sum\limits_{i=1}^{r - s}x{r - s - i}y^i& \text{ si } r>s, \\
-x^r y^r\sum\limits_{i=1}^{s-r}x^{s-r-i}y^i& \text{ si } r<s, 
\end{array}
\right. $$
le coefficient de $y^u$ de $a_r b_s\frac{x^r y^s-x^s y^r}{x-y}$ 
est $a_r b_s x^{r+s-1-u }$ si $r-1\geqslant u \geqslant s$ et $a_rb_sx^{r+s-1-u }$ si $s-1\geqslant u \geqslant r. $\\
Le coefficient icient de $y^r$ de $T(x, y)$ est 
 $$\left[ \frac{a(x)}{x^{u+1}} \right]_+(b(x) -\left[ \frac{b(x)}{x^{u+1}} \right]_+x^{u+1}) - \left[ \frac{b(x)}{x^{u+1}} \right]_+(a(x) - [ \left[ \frac{a(x)}{x^{u+1}} \right]_+ ]x^{u+1}) = \left[ \frac{b(x)}{x^{u+1}} \right]_+b(x) - \left[ \frac{b(x)}{x^{u+1}} \right]_+a(x). $$

\end{proof}
À l'aide du corollaire \ref{lenny}, on peut réécrire le crochet de Poisson des fonctions coordonnées de $M_g$ définies par le lemme \ref{Pois1} à l'aide de l'écriture canonique suivante \footnote{Ici $x$ et $y$ sont des paramètres formels. }:
\begin{equation}\label{21*}
\begin{array}{cll}
 \{u(x), u(y)\} & =&\{v(x), v(y)\} = 0, \\ 
 \{u(x), v(y)\} & = &\displaystyle {\frac{u(x)-u(y)}{x-y}}, \\ 
 \{u(x), w(y)\} & =&-2\displaystyle{ \frac{v(x)-v(y)}{x-y}}, \\ 
 \{v(x), w(y)\} & = &\displaystyle {\frac{w(x)-w(y)}{x-y}-u(x)}, \\ 
 \{w(x), w(y)\} & = &2(v(x)-v(y)). 
 \end{array}
\end{equation}

Soit l'application $\mathbf{H}^g$ allant de $M_g$ vers les polynômes unitaires de degré $2g+1$
\begin{equation}\label{H}\begin{array}{cccl}
 \mathbf{H}^g:&M_{g}& \longrightarrow & \mathbb{C}^1_{2g+1}[x]\\
& A(x)=\left(\begin{array}{cc}
v (x)&{u(x)} \\
w (x)& -v(x)
\end{array} \right)& \longrightarrow & -\det \left(\begin{array}{cc}
v (x)&{u(x)} \\
w (x)& -v(x)
\end{array} \right)=
v(x)^2+u(x)w(x)
\end{array}\end{equation}
s'il n'y a pas d'ambiguïté on omettra l'indice $g$ de $ \mathbf{H}^g$. 
L'application $\mathbf{H}$ définit deux collections de fonctions de $M_g$ vers $\mathbb{C}$. La première collection est la famille finie composée des fonctions coefficients $\{h_i\}_{0 \leqslant i \leqslant 2g+1} $de $\mathbf{H}$:
$$\mathbf{H}(A(x))=\sum\limits_{i=1}^{2g+1}h_i(A(x))x^i. $$
Le seconde collection est constituée d'une famille infinie de fonctions $\{\mathbf{H}_z\}_{z\in \mathbb{C}}$ qui sont l'évaluation de l'application $\mathbf{H}$ au point $z\in \mathbb{C}$: 
$$\begin{array}{cccc}
 \mathbf{H}_z:&M_{g}& \longrightarrow & \mathbb{C}\\
& \left(\begin{array}{cc}
v &{u} \\
w & -v
\end{array} \right)& \longrightarrow &
v(z)^2+u(z)w(z)
\end{array}$$
Les fonctions $\{\mathbf{H}_z\}_{z\in \mathbb{C}}$ s'expriment à l'aide de $\{h_i\}_{0 \leqslant i \leqslant 2g+1} $ de la manière suivante: $$\mathbf{H}_z(A)=\sum\limits_{i=0}^{2g+1}h_i(A)z^i. $$
Les deux familles de fonctions $\{\mathbf{H}_z\}_{z\in \mathbb{C}}$ et $\{h_i\}_{0 \leqslant i \leqslant 2g+1} $ définissent à l'aide de la structure de Poisson deux familles de champs Hamiltoniens $\{D^{M_g}_z\}_{z\in \mathbb{C}}$ et $\{D^{M_g}_i\}_{0 \leqslant i \leqslant 2g+1} $ respectivement de la manière suivante:
\begin{eqnarray}\label{28}
D^{M_g}_z&=& \{\cdot, \mathbf{H}_z\}, \text{\, où } z\in\mathbb{C}, \\
D^{M_g}_i&=& \{\cdot, h_i\}, \text{\, avec } i\in[0, 2g+1]. 
\end{eqnarray}
On omettra d'écrire l'indice $M_g$ de $ D^{M_g}_z$ et $D^{M_g}_i $ s'il n'y a pas d'ambiguïté. 
Les champs de vecteurs $D_z$ pour tout $z\in \mathbb C$, à l'aide du crochet de Lie:
\begin{align}
D_z\vert_{A(x)}&= \left[A(x), -\displaystyle{\frac{A(z)}{x-z}}- \left(\begin{array}{cc}
0 & 0 \\ 
u(z) & 0
\end{array} \right)\right], \label{cc}
\end{align}
 Le champ de vecteur $D_z\vert_{A(x)}$ défini par l'égalité (\ref{cc}) est une matrice dont les composantes sont des polynômes en $z$ de degré au plus $g-1$ par conséquent $D_z\vert_{A(x)}=\sum\limits_{i=0}^{g-1}z^iD_i\vert_{A(x)}$ \footnote{ Notez que si $z=0$, on établit $z^0=1$.}. 
Cependant par définition $D_z\vert_{A(x)}=\sum\limits_{i=0}^{2g+1}D_i\vert_{A(x)}z^i$ alors 
\begin{align}\label{dimg}
D^{M_g}_i= \{\cdot, h_i\}=0, \text{\, pour tout } g \leqslant i \leqslant 2g+1. 
\end{align}
Les fonctions $(h_i)_{ g \leqslant i \leqslant 2g+1}$ sont appelées des fonctions de Casimir pour la structure de Poisson. \\
Soit ${0 \leqslant i \leqslant g-1}$, l'égalité (\ref{dimg}) entraîne que le champ de vecteur $D_i$ correspond au coefficient du monôme $z^i$ de (\ref{cc}). Par conséquent, on peut écrire les champs $(D_i)_{0 \leqslant i \leqslant g-1}$ à l'aide du crochet de Lie de la manière suivante:
\begin{align}
D_i\vert_{A(x)}&=\left[A(x), \left[ \displaystyle{\frac{A(x)}{x^{i+1}}}\right]_+ - \left(\begin{array}{cc}
0 & 0 \\ 
u_i & 0
\end{array} \right)\right], \label{c'}
\end{align}
où $\left[ {\frac{A(x)}{x^{i+1}}}\right]_+$ est la partie polynomiale de la matrice $ {\frac{A(x)}{x^{i+1}}}$. les équations (\ref{cc}) et (\ref{c'}) sont appelés les équations de Lax. 

À l'aide des égalités \ref{21*}, les crochets de Poisson de $\mathbf{H}_{x'}$ avec $u(x), v(x)$ et $w(x)$ s'écrivent comme il suit:
\begin{align}\begin{array}{cll}
\{u(x), \mathbf{H}_{x'}\} &=\{u(x), v^2(x')+u(x')w(x')\} & =2\displaystyle{\frac{u(x)v(x')-u(x')v(x)}{x-x'}}, \\ 
\{v(x), \mathbf{H}_{x'}\} &=\{v(x), v^2(x')+u(x')w(x')\} & =\displaystyle{\frac{u(x')w(x)+w(x')u(x)}{x-x'}}-u(x')u(x), \\ 
\{w(x), \mathbf{H}_{x'}\} &=\{w(x), v^2(x')+u(x')w(x')\}&= 2\displaystyle{\frac{v(x)w(x')-w(x)v(x')}{x-x'}}+2v(x)u(x'). 
\end{array} \label{eg}
\end{align}

\subsection{}\label{part3}Notons que l'action de la différentielle $\frac{d}{dt}\Big\rvert _{(x', y')}$ sur les polynômes $u(x), v(x)$ et $w(x)$ définie par (\ref{m2}) et (\ref{m11}) et l'action de la dérivation définie par le crochet de Poisson $\{\cdot, \mathbf{H}_{x'}\}$ sur les polynômes $u(x), v(x)$ et $w(x)$ exprimée par les égalités (\ref{eg}), diffèrent par la multiplication de la constante $2$. 
$$\{\mathbf{H}_{x'}, \cdot\}=2\frac{d}{dt}\Big\rvert _{(x', y')}. $$

\bigskip

De prime abord, il n'y a aucun lien qui relie la structure de Poisson introduite par Vanhaneck qui est définie sur la base des racines des polynômes $u(x)$ et $h(x)- v^2(x)$ avec le système dynamique défini par Mumford à l'aide de la dérivée $\frac{d}{dt}$, toutefois leur ajustement est parfait et cela reste énigmatique. De plus certains choix pris par Mumford qui sont nécessaires pour établir le système integrable même si ces choix donnent l'impression d'avoir été faits arbitrairement 
sont qu'il le fait de la chance ou est du génie ? 
\\
Remarquez que le polynôme $h$ en $x$ ne définit pas uniquement l'équation de la courbe hyperelliptique $h(x)=y^2$ mais il définit aussi des Hamiltoniens. Ces Hamiltoniens sont les coefficients de $h$ qui sont polynomiaux en $((u_i)_{0 \leqslant i \leqslant g-1}, (v_i)_{0 \leqslant i \leqslant g-1}), (w_i)_{0 \leqslant i \leqslant g-1})$, et leur crochet de Poisson commutent $(\{h_i, h_j\}=0)_{0 \leqslant i, j \leqslant 2g}$, ces hamiltoniens forment le système intégrable de Mumford. Ce double rôle que joue du polynôme $h$ est si essentiel qu'on peut se demander si c'est une pure coïncidence ou miracle des mathématiques?

\bigskip

\section{Stratification}\label{section3}
Mumford et Vanhaecke ont étudié et décrit la fibre $\mathbf{H}^{-1}(h)$ lorsque le $h$ de $\mathbb{C}_{2g+1}^1[x]$ (resp. $\mathbb{C}_{2g+2}^1[x]$) défini la courbe lisse $\mathcal{C}$ d'équation affine $y^2=h(x)$. Dans cette partie on approfondira nos connaissances en s'intéressant aux cas où $h$ a des racines multiples c'est à dire lorsque la courbe $\mathcal{C}$ d'équation affine $y^2=h(x)$ a des points singuliers. 
À cet égard, on introduira trois stratifications, la première stratification sera basée sur l'étude algébrique de l'ensemble $M_g$, la seconde stratification sera fondée sur une description géométrique détaillée de $M_g$ ainsi que la fibre $\mathbf{H}^{-1}(h)$, la troisième stratification est la plus fine stratification de la fibre $\mathbf{H}^{-1}(h)$ exposée dans cet article et elle combine les spécificités des deux précédentes stratifications. 

\bigskip

Nous rappelons la définition de stratification: 
\begin{defi*}
Soit $(I, \leqslant)$ un ensemble (partiellement) ordonné et soit $V$ une variété algébrique. Une stratification de $V$ est une partition $(S_i)_{i\in I}$ de $V$ où chaque élément $S_i$ est une variété quasi-affine de $V$ telle que sa fermeture de Zariski de vérifie la condition suivante:
$$\overline{S_i}=\displaystyle{\bigsqcup_{j \leqslant i} S_j} \; \text{ pour tout } i\in I. $$
On appelle $S_i$ une strate de la stratification $(S_i)_{i\in I}$. 
\\
Soient deux stratifications $(S_i)_{i\in I} $ et $(S'_j)_{j\in J} $ de la variété algébrique $V$. La stratification $(S'_j)_{j\in J} $ est dite plus fine que $(S_i)_{i\in I} $, si pour tout $j\in J$ il existe un unique $i\in I$ tel que $S'_j \subseteq S_i$. 
\end{defi*}

\subsection{}\label{strati1}Avant de définir la première stratification de l'ensemble $M_g$, nous introduisons la notation suivante: 
Soit $B(x)=\left(\begin{array}{cc}
a (x)&{c(x)} \\
b (x)& -a(x)
\end{array} \right)$ une matrice de $\mathfrak{sl}_2(\mathbb{C}[x])$ on définit le $\PGCD(B(x))= \PGCD(a(x), b(x), c(x))$. 
\begin{defi*}
On définit l'application $\rho$ de la manière suivante:
\begin{equation}
\begin{array}{cccl}
\rho:& \mathfrak{sl}_2(\mathbb{C}[x]) &\longrightarrow &\mathbb{N}\\
& B(x) &\longrightarrow & \deg(\PGCD B(x))
\end{array}
\end{equation}
La restriction de l'application $\rho$ à la sous-variété affine $M_g$ est l'application $\rho_g$: 
\begin{equation*}
\begin{array}{cccl}
\rho_g:&M_g &\longrightarrow &\{0, 1, 2, \cdots, g-1\}\\
& \left(\begin{array}{cc}
v (x)&{u(x)} \\
w (x)& -v(x)
\end{array} \right)&\longrightarrow & \deg(\PGCD(u(x), v(x), w(x)))
\end{array}
\end{equation*}
Il nous arrivera d'omettre d'écrire l'indice $g$ de l'application restriction $\rho_g$ s'il n'y a pas d'ambiguïté. 
\end{defi*}
L'application $\rho_g$ est surjective car le $\deg{v(x)} \leqslant g-1 <\deg{u(x)}<\deg{w(x)}$. L'image inverse par $\rho_g$ de $g-i$ avec $0 \leqslant g-i \leqslant, g-1$ est notée: \begin{equation*}\label{S_i}S_{g, i}=\rho_g^{-1}(g-i). \end{equation*}
Le degré du $\PGCD(u(x), v(x), w(x))$ est unique, alors pour tout deux elements distincts $0 \leqslant i, j \leqslant g-1$, on a $S_{g, i} \cap S_{g, j}=\emptyset$. 

\medskip

Soit $P$ un polynôme de degré $n>0$
$$P(x) = \sum_{i=0}^n c_ix^i. \eqno{(\ast)}$$
Soit $l \in \mathbb{N}^*$ on note par $M:= M_{P, l}$ la matrice $ l\times (n+l)$ dont les entrées \begin{align}\label{Matrx}
 m_{r, s}= c_{n+(r-s)}, \text{ pour $1 \leqslant r \leqslant l$ et $1 \leqslant s \leqslant n+1$}, 
\end{align} avec la convention suivante si $j \notin [0, n]$ on a $c_j= 0$. La matrice $M:= M_{P, l}$ est appelée une matrice Toeplitz. 
\\
Soit $\gamma$ une racine de $P$ d'ordre $k$, alors pour tout $j \in [0, k-1]$ et $m \in \mathbb N$, on a la $j$ dérivée suivante:
$$(x^mP(x))^{(j)}(\gamma) =0. \eqno{(\ast \ast)}$$

Soit $s=n-i$, on note 
$$b(s)_j =[x^{i+m}]^{(j)}(\gamma)=\frac{(n+m-s)!}{(n+m-s-j)!}\gamma^{n+m-s-j}=\frac{(i+m)!}{(i+m-j)!}\gamma^{i+m-j }, $$
Des égalités $(\ast), (\ast\ast)$ et par notre convention sur les coefficients $\{c_j\}$, on obtient que
$$(x^mP(x))^{(j)}(\gamma)=\sum_{i=j-m}^{n} c_{i}\frac {(i+m)!}{(i+m-j)!}\gamma^{i+m-j }=\sum_{s=0}^{n+m-j} c_{n-s}b(s)_j =0, 
\eqno {(\ast\ast\ast)}$$
Soit $j \in [0, k-1]$. On définit le vecteur colonne $v_{j
}$ de dimension $(n+l)\times 1$ , dont les entrées sont 
\begin{align}\label{vg}
v_{j
}^s
=\left\{\begin{array}{cl}b(s)_j &\text{ pour }{0 \leqslant s \leqslant n+m-j}, \\ 
0 &\text{ sinon } \end{array} \right. \end{align}

 %
%
On a \begin{equation} \label{process}M_{P, l}v_j =(\sum_{s=t}^{n+m-j} c_{n-s+t}b(s)_{j})_{0 \leqslant t \leqslant l}= 
[(x^{m-t}P(x))^{(j)}(\gamma)]_{0 \leqslant t \leqslant l}\end{equation} De $(\ast\ast)$, on a que $$M_{P, l}v_j=(0^{l}), $$
c'est à dire $v_j \in \ker (M_{P, l})$.
On remarque que les $(v_{j})_{j \in [0, k-1]}$ sont linéairement indépendants car pour tout $ {j \in [0, k-1]}$ uniquement les $n+m-j+1$ premières entrées du vecteurs $ v_j$ sont non nuls, alors car $v_{j}^{n+m-j}=1
$ et $v_{j+t}^{n+m-j} = 0$ pour ${t>0 
}$, par conséquent le vecteur $ v_j$ et les vecteurs $ (v_{j+t})_{t \in [1, k-j-1]}$ sont linéairement indépendants.
\\
Soit $(\gamma_r)_{0 \leqslant r \leqslant u}$ les racines $P(x)$ de multiplicité $(k_r)_{0 \leqslant r \leqslant u}$
. 
En répétant le procédé incarné par l'équation (\ref{process}) pour chaque racine $(\gamma_r)$ de $P$, on obtient $k_r$ vecteurs $(v_{j_r, \gamma_r})_{0 \leqslant r \leqslant u \, ; \, 0 \leqslant j_r \leqslant k_r-1 }$ dans $\ker M_{P, l}$.\\
Nous allons montrer l'égalité suivante:
\begin{equation}\label{ker1}\ker M_{P, l}=\langle v_{j_r, \gamma_r}\rangle_{0 \leqslant r \leqslant u \, ; \, 0 \leqslant j_r \leqslant k_r-1 }.\end{equation}
La matrice $M_{P, l}$ de $l\times (n+l)$, alors le rang $\rg M_{P, l} \leqslant l$. La sous-matrice $M_{P, l, l}$ de $M_{P, l}$ composée des $l$ premières colonnes est triangulaire supérieure par conséquent elle est de rang $l$, ceci entraine que $\rg M_{P, l} = l$ et $ \dim\ker M_{P, l}= n$, alors
\begin{align}\label{pn}
\dim\ker M_{P, l}= \deg(P).
\end{align}
Par l'égalité (\ref{process}) on a l'inclusion $\langle v_{j_r, \gamma_r}\rangle_{0 \leqslant r \leqslant u \, ; \, 0 \leqslant j_r \leqslant k_r-1 } \subseteq \ker M_{P, l}.$
Pour prouver l'égalité (\ref{ker1}), il suffit de s'assurer que les vecteurs $(v_{j_r, \gamma_r})_{0 \leqslant r \leqslant u \, ; \, 0 \leqslant j_r \leqslant k_r-1 }$ sont linéairement indépendants.
On a vu que $(v_{j_r, \gamma_r})_{ 0 \leqslant j_r \leqslant k_r-1 }$ sont linéairement indépendants. 
Pour $ 0 \leqslant j_r \leqslant \max\{k_r-1 | \; 0 \leqslant r \leqslant u \}$, 
 la matrice de Vandermond de $[x^mP(x)]^{j_r-1}$ nous assure que les vecteurs $(v_{j_r, \gamma_r})_{ 0 \leqslant r \leqslant u }$ sont linéairement indépendants, 
on déduit que 
les $n$ vecteurs $\langle v_{j_r, \gamma_r}\rangle_{0 \leqslant r \leqslant u \, ; \, 0 \leqslant j_r \leqslant k_r-1 }$ sont linéairement indépendants et par conséquent 
$$\ker M_{P, l}=\{v_{j_r, \gamma_r}\; | \;{0 \leqslant r \leqslant u; 0\leqslant j_r \leqslant k_r-1 }\}. \eqno{(i)} $$

\medskip

Nous allons utiliser la matrice Toeplitz pour un polynôme afin de déterminer le degré du $\PGCD$ de trois polynômes. 
\\
Soient les polynômes $P(x)=\sum_{i=0}^n p_ix^i $, $Q(x) = \sum_{i=0}^m q_ix^i$ et $Z(x) = \sum_{i=0}^y z_ix^i$ de degré $n=\deg P \, , m=\deg Q $\, $y=\deg Z $. On note $C=\max(n+m+1, y+1)$. 
Soit la matrice $T(P, Q, Z)$ de dimension $(n+m+y)\times C $ dont les entrées sont 
$$t_{r, s}=\left\{ \begin{array}{l} p_{n+(r-s)} \text{ pour } 1 \leqslant r \leqslant m, \\
 q_{r-s} \text{ pour } m+1 \leqslant r \leqslant n+m\\
 z_{y+(r-s)-m-n} \text{ pour } m+n+1 \leqslant r \leqslant n+m+y. 
\end{array}\right. $$
Avec 
la convention suivante: si $j \notin [0, n], k \notin [0, m]$ et $ l \notin [0, y]$ on a $p_j= 0, q_k= 0 $ et $z_l= 0 $. \\
On note par $(0^{i\times j})$ la matrice nulle de dimension $i\times j$. \\
Soient les matrices $M_{P, m}$, $M_{Q, n}$ et $M_{Z, y}$ définies par la formule (\ref{Matrx}). 
La matrice $T(P, Q, Z)$ peut être exprimée 
comme la 
 composée des bloques $(M_{P, m}, 0^{m \times (C-m-n-1)}) $ se situant au-dessus des bloques de $(M_{Q, n}, 0^{n \times (C-m-n-1)})$ qui sont à leur tour au-dessus des bloques $(M_{Z, y}, 0^{y \times (C-y-1)})$. 

\medskip

Soit le polynôme $D(x)=\PGCD (P(x), Q(x), Z(x))$ de degré $ d$.\\ Les matrices $M_{D, C-d-1}$, 
 $(M_{P, m}, 0^{m \times (C-m-n-1)})$, $(M_{Q, n}, 0^{n \times (C-m-n-1)})$, $(M_{Z, y}, 0^{y \times (C-y-1)})$ ont $C$ colonnes. \\

Montrons que $\displaystyle{\ker(\mathbf{M}_{D, C-d-1})}=\ker(T(P, Q, Z))$. Par définition le noyau de la matrice $T(P, Q, Z)$ est :
$$\ker(T(P, Q, Z))= \ker(M_{P, m}, 0^{m \times (C-m-n-1)})\cap \ker(M_{Q, n}, 0^{n \times (C-m-n-1)})\cap \ker(M_{Z, y}, 0^{y \times (C-y-1)}).$$

Montrons que $\ker(M_{P, m}, 0^{m \times (C-m-n-1)})\cap \ker(M_{Q, n}, 0^{n \times (C-m-n-1)})\cap \ker(M_{Z, y}, 0^{y \times (C-y-1)})$ est non vide si et seulement si $P$, $Q$ et $Z$ ont des racines communes.\\ 
Soit $\gamma$ une racine de $D(x)$ de multiplicité $k$ alors $\gamma$ est une racine de $P$ (resp. $Q$, $Z$) de multiplicité $k_1$ ($k_2$ et $k_3$ respectivement), avec $k=\min(k_1, k_2, k_3)$ alors en vertu de ${(i)}$, (\ref{vg}) et (\ref{process}), on a tout pour $j \in [0, k-1]$
\begin{align*}M_{D, C-d-1}(v_{j, \gamma})&=(M_{P, m}, 0^{m \times (C-m+n)})(v_{j, \gamma})\\
&= (M_{Q, n}, 0^{n \times (C-m-n-1)})(v_{j, \gamma})\\
&= (M_{Z, y}, 0^{y \times (C-m-n-1)})(v_{j, \gamma})\\
&= 0^{ (m+n+y)\times 1}
\end{align*}
On déduit que
%
\begin{align*}\ker M_{D, m+n-d} &= \ker(M_{P, m}, 0^{m \times (C-m-n-1)}) \cap \ker(M_{Q, n}, 0^{n \times (C-m-n-1)}) \cap \ker(M_{Z, y}, 0^{y \times (C-y-1)}), \\ 
&=\ker(T(P, Q, Z))
\end{align*}
%
 Par l'égalité (\ref{pn}) on en conclut que
\begin{equation}\label{noyau}\dim \ker T(P, Q, Z) = \deg \PGCD (P, Q, Z). \end{equation}
De plus comme le $\deg\PGCD (P, Q, Z)=d$, alors $$\text{Pour $ 0 \leqslant l \leqslant d-1$, tous les mineurs de $T(P, Q, Z)$ d'ordre $l$ sont nuls. } \eqno{(ii)}$$

\begin{prop}\label{p30}
 La famille $(S_{g, i})_{i\in\{0, \dots, g\}}$ définit une stratification de l'espace affine $M_{g}$. 
\end{prop}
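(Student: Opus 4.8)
The argument mirrors that of Proposition~\ref{st}. The partition property is already established, so it remains to verify, for every $i$, the closure relation $\overline{S_{g,i}} = \bigsqcup_{j \leqslant i} S_{g,j}$. Write $C_i := \bigsqcup_{j \leqslant i} S_{g,j}$. Since $j \leqslant i$ means $g-j \geqslant g-i$, one has $C_i = \{A(x) \in M_g \mid \deg(\PGCD(A)) \geqslant g-i\}$. The first step is to show that $C_i$ is Zariski-closed. Applying Theorem~\ref{thPG} to the triple $(u,v,w)$ of entries of $A$ gives $\deg(\PGCD(A)) = \dim \Ker(S(u,v,w)^t) = N - \rg(S(u,v,w)^t)$, where $N$ is the number of columns of that matrix. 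Hence $A \in C_i$ if and only if $\rg(S(u,v,w)^t) \leqslant N - (g-i)$, i.e. if and only if every minor of $S(u,v,w)^t$ of order strictly greater than $N-(g-i)$ vanishes. The entries of $S(u,v,w)^t$ are affine-linear in the coordinates $u_k,v_k,w_k$, so these minors are polynomial functions on $M_g$ and $C_i$ is their common zero locus, hence closed.

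As in Proposition~\ref{st}, the identity $S_{g,i} = C_i \setminus C_{i-1}$ (with $C_{-1} = \emptyset$), together with the closedness of $C_i$ and $C_{i-1}$, shows that $S_{g,i}$ is open in $C_i$; thus each stratum is quasi-affine and $\overline{S_{g,i}} \subseteq C_i$. The remaining and main difficulty is the reverse inclusion $C_i \subseteq \overline{S_{g,i}}$, i.e. the density of $S_{g,i}$ in $C_i$.

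To prove it, I would fix $A \in C_i$, set $d := \deg(\PGCD(A)) \geqslant g-i$, and use (\ref{t}) to write $A = D A'$ with $D := \PGCD(A)$ monic of degree $d$ and $A'$ a regular matrix. A degree count shows $A' \in M_{g-d}$. Over $\mathbb{C}$ I can factor $D = D'' D'$ with $D''$ monic of degree $g-i$ and $D'$ monic of degree $d-(g-i) \geqslant 0$; setting $B := D' A'$, a further degree count gives $B \in M_i$ and $A = D'' B$. Multiplication by the fixed polynomial $D''$ defines a morphism $\mu : M_i \to M_g$ (its components are affine-linear in the coordinates), which is therefore Zariski-continuous and which sends the regular locus $\{\PGCD = 1\}$ of $M_i$ into $S_{g,i}$, since $\PGCD(D'' f, D'' h, D'' k) = D'' \PGCD(f,h,k)$ is of degree $g-i$. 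Because $M_i \simeq \mathbb{C}^{3i+1}$ is irreducible and its regular locus is a non-empty Zariski-open subset (it contains, for instance, $u=x^i$, $v=0$, $w=x^{i+1}+1$), this locus is dense in $M_i$; in particular $B$ lies in its closure. Continuity of $\mu$ then yields $A = \mu(B) \in \mu(\overline{\{\PGCD=1\}}) \subseteq \overline{\mu(\{\PGCD=1\})} \subseteq \overline{S_{g,i}}$. This establishes $C_i \subseteq \overline{S_{g,i}}$, hence $\overline{S_{g,i}} = \bigsqcup_{j \leqslant i} S_{g,j}$, and $(S_{g,i})_{i}$ is a stratification.

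I expect the density step to be the crux: the key insight is to read the regular part $A'$ of $A$ as a point of a smaller Mumford space $M_{g-d}$ and to repackage $A$ as $D''$ times a matrix $B$ of $M_i$, thereby reducing the required density to the elementary fact that regular matrices are dense in the irreducible affine space $M_i$. By contrast, the closedness of $C_i$ is routine once Theorem~\ref{thPG} is invoked.
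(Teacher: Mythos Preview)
Your proof is correct, and in one respect it is more complete than the paper's own argument.

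For the closedness of $C_i$, the paper proceeds differently: it does not invoke Theorem~\ref{thPG} and the vanishing of minors, but instead characterises $\rho^{-1}(\{g-i,\dots,g\})$ by an explicit system of subresultant equations, namely $\R_0(u,v)=\cdots=\R_{g-i-1}(u,v)=0$ together with $\R_0(w,\R_j(u,v))=\cdots=\R_{g-i-1}(w,\R_j(u,v))=0$ for all $g-i\leqslant j\leqslant g$, using Proposition~\ref{rp}. Both routes yield polynomial equations in the coordinates of $M_g$; yours is shorter and structural, while the paper's gives a concrete defining ideal.

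The more substantial difference concerns the density step $C_i\subseteq\overline{S_{g,i}}$. The paper simply asserts that, since $S_{g,g-i}$ is open in the closed set $\bigsqcup_{j\leqslant g-i}S_{g,j}$, its closure equals that set, without justifying the reverse inclusion. You supply the missing argument: factoring $A=D''B$ with $B\in M_i$ and using the affine morphism $\mu_{D''}$ of Proposition~\ref{p33} to transport the (obvious) density of regular matrices in the irreducible space $M_i\simeq\mathbb{C}^{3i+1}$ into $M_g$. This is exactly the mechanism the paper later uses (Propositions~\ref{aze} and~\ref{p33}) to relate strata across different Mumford spaces, so your argument is entirely in the spirit of the paper, but it fills a gap the paper leaves open at this point.
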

\begin{proof}[Preuve]
Soit $0 < i \leqslant g$. On montre que l'image inverse de $\{i, \dots, g\}$ par $\rho$ est un fermé de Zariski de
$M_g$. 

\smallskip

L'image inverse $$\rho^{-1}(\{i, \dots, g\})= \bigsqcup\limits_{j=0}^{g-i}S_{g, j}=\left\{ A(x)=\left(\begin{array}{cc} v(x) & u(x) \\ w(x) & -v(x)
\end{array} \right) \in M_{g} \text{ tels que } \deg(\PGCD(u, v, w))\geqslant i\right\}. $$ 
Les entrées de la matrice de Toeplitz $T(u, v, w)$ sont les coefficients de $u, v$ et $w$, alors les mineurs de $T(u, v, w)$ sont polynomiaux en $(u_i)_{0 \leqslant i \leqslant g-1}, (v_i)_{0 \leqslant i \leqslant g-1}$ et $(w_i)_{0 \leqslant i \leqslant g}$. \\
 Si $A(x)=\left(\begin{array}{cc} v(x) & u(x) \\ w(x) & -v(x)
\end{array} \right) \in \rho^{-1}(\{i, \dots, g\})$ implique $\deg(\PGCD(u, v, w))\geqslant i$. D'après $(ii)$, on a 
que les mineurs d'ordre $j$ avec $i+1 \leqslant j \leqslant g $ de $T(u, v, w)$ sont nuls et par conséquent 
 $\rho^{-1}(\{i, \dots, g\})= \bigsqcup\limits_{j=0}^{g-i}S_{g, j}$ est un fermé de Zariski de $M_{g}$. \\ 
On a \begin{align*}
S_{g, g-i}&=\bigsqcup\limits_{j=0}^{g-i}S_{g, j} -\bigsqcup\limits_{j=0}^{g-i-1}S_{g, j}. 
\end{align*}
donc $S_{g, g-i}$ est le complémentaire d'un fermé donc un ouvert de Zariski du fermé $\bigsqcup\limits_{j \leqslant g-i}S_{g, j}$ et donc une variété quasi-affine, par conséquent la fermeture de $S_{g, g-i}$ est $\overline{ S_{g, g-i}}= \bigsqcup\limits_{j \leqslant g-i}S_{g, j}$. 
Par conséquent, les fibres de l'application $\rho$ définissent bien une stratification de $M_g$. 
\end{proof}

\bigskip

\subsection{}\label{strati2}Nous définissons maintenant la deuxième stratification de l'ensemble $M_g$, qui est décrite à travers l'application $\sigma$ définie de la façon suivante:
$$\begin{array}{cccl}
\sigma:& M_g & \longrightarrow & \{1, \dots, g\} \\ 
 & A(x) &\longrightarrow & \dim \left\langle D_0\vert_{A(x)}, \dots, D_{g-1}\vert_{A(x)}\right\rangle. 
\end{array}$$
La fibre de $\sigma$ au dessus de $i$ est notée par $M_{g, i}$:
\begin{align} \label{Mi}
 M_{g, i}&=\{A(x)\in M_{g}\mid \dim \left\langle D_0\vert_{A(x)}, \dots, D_{g-1}\vert_{A(x)}\right\rangle=i\}. 
\end{align}
On note par $I$ l'ensemble des entiers $1 \leqslant i \leqslant g$ tels que $M_{g, i} \neq \emptyset $. On verra plus loin (proposition \ref{prop1bis}) que $I=\{1, \cdots , g\}$.
\begin{prop}\label{st}
La famille $(M_{g, i})_{i\in I}$ définit une stratification de $M_g$. 
\end{prop}
\begin{proof}[Preuve]
Soit $A(x)$ une matrice de $M_g$. Le rang des vecteurs polynomiaux $ \dim \left\langle D_0\vert_{A(x)}, \dots, D_{g-1}\vert_{A(x)}\right\rangle$ est unique, par conséquent pour tout deux éléments distincts $ i, j $ de $I$, on a $M_{g, i} \cap M_{g, j}=\emptyset $. De plus comme l'application surjective $\sigma$, alors $M_{g}=\bigsqcup\limits_{i\in I}M_{g, i}$. \\
Nous montrons que pour tout ${i\in I}$ le sous ensemble $
\bigsqcup\limits_{j \leqslant i } M_{g, j}
$ est constitué de fermés de Zariski de l'espace affine $M_{g}$. 
 D'après l'équation (\ref{c'}) on sait que les champs de vecteurs $D_{k}$ sont des fonctions polynomiales en ${u_{0}, \dots, u_{g-1}}$, ${v_{0}, \dots, v_{g-1}}$, ${w_{0}, \dots, w_{g}}$. Soit $\mathcal{D}=(D_{0}, \dots, D_{g-1})$ une matrice $(3g+1)\times g$ à coefficients dans
 D'après l'égalité (\ref{Mi}), 
$$\bigsqcup\limits_{j \leqslant i } M_{g, j}=\{A(x)\in M_{g} \mid \dim \langle D_{0}\vert_{A(x)}, \dots, 
D_{g-1}\vert_{A(x)} \rangle \leqslant i \}. $$ Autrement dit une matrice $A(x)$ appartient à $\bigsqcup\limits_{j \leqslant i } M_{g, j}$ si et seulement si 
tous les mineurs d'ordre $k > i $ de la matrice $\mathcal{D}$ sont
nuls. D'après l'équation (\ref{c'}), les vecteur $ D_0\vert_{A(x)}, \dots, D_{g-1}\vert_{A(x)}$ sont vecteurs polynomiaux en ${(u_{i})_{0 \leqslant i \leqslant g-1}}$, ${(v_{i})_{0 \leqslant i \leqslant g-1}}$, ${(w_{i})_{0 \leqslant i \leqslant g+1}}$. Alors, les mineurs de la matrice $\mathcal{D}=((D_i)_{0 \leqslant i \leqslant g-1}) $ sont aussi des polynômes en ${(u_{i})_{0 \leqslant i \leqslant g-1}}$, ${(v_{i})_{0 \leqslant i \leqslant g-1}}$, ${(w_{i})_{0 \leqslant i \leqslant g+1}}$.
 On conclut que l'ensemble 
 $\bigsqcup\limits_{j \leqslant i } M_{g, j}$ est un fermé de Zariski de
$M_{g}$. 
L'ensemble $M_{g, i}$ est une variété quasi-affine car $M_{g, i}$ est le complémentaire du fermé de Zariski 
:
$$M_{g, i}=\bigsqcup\limits_{j \leqslant i } M_{g, j}-\bigsqcup\limits_{j \leqslant i-1} M_{g, j}. $$
Par conséquent, $M_{g, i}$ est un ouvert dense de Zariski de $ \bigsqcup\limits_{j \leqslant i } M_{g, j}$ 
La famille $(M_{g, i})_{i\in I}$ est bien une stratification de $M_{g}$. 
\end{proof}
\bigbreak

\subsection{}\label{strati12} Le théorème suivant est important car il associe l'aspect géométrique de l'ensemble $M_g$ et l'aspect algébrique en chaque point $A(x)$ de $M_g$, il est aussi primordial car il nous permettra d'établir le lien entre les deux stratifications $(S_{g, i})_{0 \leqslant i \leqslant g-1}$ et $(M_{g, i})_{i\in I}$. 
\begin{theo} \label{prop1}
Les champs de vecteurs $D_0, \dots, D_{g-1}$ sont linéairement indépendants au point $A^0(x)=\left(\begin{array}{cc}
v^0(x)& u^0(x)\\ 
w^0(x)& -v^0(x)
\end{array} \right)$ de ${M}_g$ si et seulement si le $\deg\PGCD(A^0(x))=0$. 
\end{theo}
\begin{proof}[Preuve]
Soit $A^0(x) $ un point de $ {M}_g $ où les champs de vecteurs $D_0, \dots, D_{g-1}$ sont linéairement dépendants, alors il existe 
des constantes non toutes nulles $(a_i)_{0 \leqslant i \leqslant g-1}\in\mathbb{C}^g/ \{0\}$ telles que
$$ \sum\limits_{i=0}^{g-1} a_iD_i\vert_{A^0(x)} = 0. $$
En évaluant l'équations (\ref{c'}) au point $A^0(x)$, on a:
\begin{align}
 \sum\limits_{i=0}^{g-1} a_i D_i\vert_{A^0(x)}&= \sum\limits_{i=0}^{g-1} a_i \left[A^0(x), \left[ \displaystyle{\frac{A^0(x)}{x^{i+1}}}\right]_+ - \left(\begin{array}{cc}
0 & 0 \\ 
{u^0(x)_i} & 0
\end{array} \right)\right], \label{d'}
\end{align}
en développant l'égalité précédente (\ref{d'}), on obtient:
\begin{align}\label{Q, R}
\sum\limits_{i=0}^{g-1} a_i D_i\vert_{A^0(x)} u(x)&=2 v^0(x)\underbrace{ \sum\limits_{i=0}^{g-1} a_i
 \left[\frac{u^0(x)}{x^{i+1}}\right] _+}_{Q(x)}-2u^0(x)\underbrace{\sum\limits_{i=0}^{g-1} a_i
 \left[\frac{v^0(x)}{x^{i+1}}\right]_+ }_{R(x)}=0, \\ \label{S, Q}
 \sum\limits_{i=0}^{g-1} a_i D_i\vert_{A^0(x)} v(x) &=
u^0(x)\underbrace{ \sum\limits_{i=0}^{g-1} a_i (\left[\frac{w^0(x) }{x^{i+1}}\right]_+-
 u^0(x)_{i})}_{S(x)}-w^0(x)\underbrace{ \sum\limits_{i=0}^{g-1} a_i \left[\frac{u^0(x)}{x^{i+1}}\right]_+ }_{Q(x)}=0, 
\\ \nonumber
 \sum\limits_{i=0}^{g-1} a_i D_i \vert_{A^0(x)}w(x)&= 2w^0(x)\underbrace{ \sum\limits_{i=0}^{g-1} a_i
 \left[\frac{v^0(x)}{x^{i+1}}\right]_+ }_{R(x)}-2v^0(x)\underbrace{ \sum\limits_{i=0}^{g-1} a_i
 (\left[\frac{w^0(x)}{x^{i+1}}\right]_+ - u^0(x)_{i})}_{S(x)}=0. 
\end{align}
Les égalités (\ref{Q, R}) et (\ref{S, Q}) nous définissent trois polynômes $Q(x), R(x)$ et $S(x)$ en $x$. 
On sait que le polynôme $u^0(x)$ est unitaire de degré $
g$ et le polynôme $w^0(x)$ est unitaire de degré $
g+1, $ et par hypothèse le $g$-uplet $(a_i)_{0 \leqslant i \leqslant g-1}$ est différent de zéro et par définition, alors l'égalité (\ref{S, Q}) entraine que $\deg(S(x))>0$, et ceci implique que $Q(x)$ (resp. $S(x)$) est non nul de degré au plus $g-1$ (resp. $g$). Toujours par l'égalité $u^0(x)S(x)=w^0(x)Q(x)$ on a que les racines ainsi que leurs multiplicités des
 polynômes $ u^0(x)S(x) $ et $w^0(x)Q(x)$ sont identiques, et comme le degré $\deg Q(x) \leqslant g-1 $ et $S(x) \leqslant g $, il en résulte qu'il existe un facteur en commun entre les
polynômes $u^{0}(x)$ et $w^{0}(x)$ de degré au moins $1$. Plus précisément, il existe $a \in \mathbb{C}$ tel que 
 $$u^0(x)=(x-a)^{\mu_u} {u^{0}}'(x), \text { où $\mu_u$ est la multiplicité de la racine, }$$ 
 et $a$ est aussi une racine de $w^0(x)Q(x)$ avec
 $$Q(x)=(x-a)^{\mu_Q} P(x) \text { où $\mu_Q$ est la multiplicité de la racine, 
 avec } \mu_Q< \mu_u.$$
Alors $a$ est une racine de $w^0(x)$ de multiplicité $\mu_u-\mu_Q$. \\
 Par l'égalité (\ref{Q, R}), $v^0(x)Q(x)=u^0(x)R(x) $, 
et d'après ce qui précèdent $$v(x)(x-a)^{\mu_Q} P(x)=(x-a)^{\mu_u} {u^{0}}'(x)R(x), $$
à l'aide du même argument $a$ est une racine de $v^0(x)$ de multiplicité $\mu_u-\mu_Q$. 
Par conséquent, $u^0(x), v^0(x)$ et $w^0(x)$ ont au moins une racine commune. On conclut que si $D_0, \dots, D_{g-1}$ sont
linéairement dépendantes au point $A^{0}$, alors le $\deg \PGCD(A^0(x)) \neq 0$.

On montre maintenant la réciproque, soit $A^0(x)$ est une matrice de $\deg\PGCD(A^0(x)) \neq 0$, alors il existe un $a \in \mathbb{C}$ tel que 
$A^0(a)=0$. Rappelons pour tout $z \in \mathbb{C}$ on a
$$
 D_z\vert_{A^0(x)}=\sum \limits_{i=0}^{g-1} z^{i} D_{i}\vert_{A^0(x)} = \left[{A^0(x)}, -\displaystyle{\frac{A^0(z)}{x-z}}- \left(\begin{array}{cc}
0 & 0 \\ 
u^0(z) & 0
\end{array} \right)\right]\;. 
$$
En évaluant au point $z=a$ on obtient
$$
\sum \limits_{i=0}^{g-1} a^{i} D_{i}\vert_{A^0(x)}= \left[{A^0(x)}, \left(\begin{array}{cc}
0 & 0 \\ 
0& 0
\end{array} \right)\right]= 0. 
$$
Par conséquent, les $g$ champs de vecteurs $D_0, \dots, D_{g-1}$ sont linéairement dépendants\footnote{Si $a=0$ on a $D_{0}=0$ et par $ \sum \limits_{i=0}^{g-1} a_{i} D_{i}= 0$ pour tout $a_0\in \mathbb{C}$ et $(a_i=0)_{ 1\leqslant i \leqslant g-1}$. } au point $A^0(x)$. 
\end{proof}

\bigskip

Soit $P(x)$ un polynôme unitaire de $\mathbb{C}[x]$ de degré $n \in \mathbb{N^{\ast }} $. On note par $\mu_{P}$
l'application affine définie de la manière suivante:
$$\begin{array}{cccl}
\mu_{P}: & M_g & \longrightarrow & M_{g+n} \\ 
 & A(x) & \longrightarrow & P(x)A(x). 
\end{array} $$
la différentielle de $\mu_{P}$ est 
$$\begin{array}{cccl}
d \mu_{P}: & TM_g & \longrightarrow &TM_{g+n} \\ 
 & D & \longrightarrow & d \mu_{P}(D), 
\end{array} $$
L'application $\mu_{P}$ est un isomorphisme affine sur son image. Sa différentielle $d \mu_{P}$ 
est notée:
$$ {\mu_{\ast}}_P(D\vert_{A^0(x)}) = d \mu_{P}(D\vert_{(A^0(x))}). $$

\begin{prop}\label{lem1} 
Soit $P(x)$ un polynôme unitaire de $\mathbb{C}[x]$ de degré $n\in \mathbb{N}^{*}$, et soient $A^0(x) \in M_g$ et $y\in \mathbb{C}$. On a
\begin{align}\label{+}
 D^{M_{g+n}}_y\vert_{{\mu_{P}}(A^0(x))}&=P(y){\mu_{\ast}}_P (D^{M_g}_y\vert_{A^0(x)}). 
\end{align}

\end{prop}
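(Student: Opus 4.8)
Le plan est de calculer les deux membres de (\ref{+}) directement au moyen de l'�quation de Lax (\ref{c...}) puis de les comparer. Posons $B(x):=P(x)A^0(x)=\mu_P(A^0)$ ; d'apr�s la proposition \ref{p33} cette matrice appartient � $M_{g+n}$, ses composantes �tant $Pu^0$, $Pv^0$ et $Pw^0$. Comme $\mu_P$ est l'application (affine) de multiplication par $P(x)$, sa diff�rentielle $\mu_{P^\ast}$ envoie tout vecteur tangent $X$ � $M_g$ -- vu comme une matrice de $\mathfrak{sl}_2(\mathbb{C}[x])$ -- sur $P(x)X$ (les bornes sur les degr�s sont pr�serv�es, $\deg(PX)$ restant dans les bornes tangentes de $M_{g+n}$). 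Enfin, l'�quation de Lax au point $A^0$ s'�crit
\begin{equation*}
D^g_y\vert_{A^0}=\left[A^0(x),\,N\right],\qquad N:=-\frac{A^0(y)}{x-y}-\begin{pmatrix}0&0\\ u^0(y)&0\end{pmatrix}.
\end{equation*}

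Pour le membre de gauche, j'�cris l'�quation de Lax pour le syst�me de Mumford d'ordre $g+n$ au point $B$. La composante $u$ de $B$ �tant $Pu^0$, sa valeur en $y$ vaut $P(y)u^0(y)$, et l'on a $B(y)=P(y)A^0(y)$. La matrice auxiliaire intervenant dans la formule de Lax se factorise alors comme
\begin{equation*}
-\frac{B(y)}{x-y}-\begin{pmatrix}0&0\\ P(y)u^0(y)&0\end{pmatrix}
=P(y)\left(-\frac{A^0(y)}{x-y}-\begin{pmatrix}0&0\\ u^0(y)&0\end{pmatrix}\right)=P(y)\,N.
\end{equation*}
C'est cette factorisation -- reposant sur l'�valuation correcte de la composante $u$ de $B$ en $y$ -- qui constitue le seul point v�ritablement � v�rifier.

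Il reste � conclure par un argument de commutation. Comme $P(x)$ est un scalaire (multiple de l'identit�) commutant avec toute matrice et $P(y)$ une constante, on obtient
\begin{equation*}
D^{g+n}_y\vert_{B}=\left[B(x),\,P(y)N\right]=\left[P(x)A^0(x),\,P(y)N\right]=P(x)P(y)\,[A^0(x),N]=P(y)\,\mu_{P^\ast}\!\left(D^g_y\vert_{A^0}\right),
\end{equation*}
ce qui est exactement l'�galit� (\ref{+}). L'essentiel de la preuve est donc la mise en place soign�e des notations ; aucune difficult� calculatoire s�rieuse n'est attendue, l'unique subtilit� �tant l'identification de la matrice de Lax du syst�me d'ordre $g+n$ au point $PA^0$ et sa factorisation par le scalaire $P(y)$.
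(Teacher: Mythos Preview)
Your proof is correct and follows exactly the same approach as the paper's own proof: both compute each side of~(\ref{+}) directly from the Lax equation~(\ref{c...}), observe that the auxiliary matrix at the point $PA^0$ is $P(y)$ times the one at $A^0$, and then use that $P(x)$ is scalar to pull it out of the commutator. Your introduction of the shorthand $N$ and the explicit identification of $\mu_{P^\ast}$ as multiplication by $P(x)$ make the argument slightly cleaner, but the substance is identical.
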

\begin{proof}[Preuve]
Soit une matrice $A^0(x)=\left(\begin{array}{cc}
v^0(x) & u^0(x) \\ 
w^0(x) & -v^0(x)
\end{array} \right)\in M_g$, et soit 
le champ de vecteurs $D_{y}^g$ au point $A^0(x)$ défini par (\ref{cc}). L'image de $D_{y}^g$ par l'application linéaire 
 $\mu_{\ast P}$ est:
\begin{align}
\mu_{\ast P}(D^{M_g}_y\vert_{A^0(x)})&= P(x)\left[{A^0(x)}, -\displaystyle{\frac{A^0(y)}{x-y}}- \left(\begin{array}{cc}
0 & 0 \\ 
u^0(y) & 0
\end{array} \right)\right]. \label{1}
\end{align}
Le champs de vecteurs $D^{M_{g+n}}_y$ au point ${\mu_{P}}(A^0(x))$ de $M_{g+n}$ est:
\begin{align}\nonumber
 D^{M_{g+n}}_y\vert_{{\mu_{P}}(A^0(x))}&= \left[{P(x)A^0(x)}(x), -\displaystyle{\frac{P(y)A^0(y)}{x-y}}- \left(\begin{array}{cc}
0 & 0 \\ 
P(y)u^0(y) & 0
\end{array} \right)\right], \\
&= P(x)P(y)\left[{A^0(x)}, -\displaystyle{\frac{A^0(y)}{x-y}}- \left(\begin{array}{cc}
0 & 0 \\ 
u^0(y) & 0
\end{array} \right)\right]. \label{2}
\end{align}
En remplaçant (\ref{1}) dans (\ref{2}) on a:
$$
D^{M_{g+n}}_y\vert_{{\mu_{P}}(A^0(x))}=P(y)\mu_{\ast P}(D^{M_g}_y\vert_{A^0(x)}). 
$$
\end{proof}
\begin{defi*}
Soit $Q$ un polynôme. On note par $\Res_{y=0} \frac{Q(y)}{y^i}$ le résidu du quotient $\frac{Q(y)}{y^i}$, défini de la manière suivante:
$$\Res_{y=0} \frac{Q(y)}{y^{i+1}}= \left[ \frac{Q(y)}{y^{i}}\right]_+- y\left[ \frac{Q(y)}{y^{i+1}}\right]_+. $$
\end{defi*}
\begin{coro}\label{coco}
Pour tout $0 \leqslant i \leqslant g+n-1$ on a:
$$D^{M_{g+n}}_i\vert_{{\mu_{P}}(A^0(x))}= 
\Res_{y=0} \frac{P(y)\mu_{\ast P}(D^{M_g}_y\vert_{A^0(x)})}{y^{i+1}}. $$
\end{coro}
\begin{proof}[Preuve]
Soit $y\in \mathbb{C}$. Par définition 
$$D^{M_{g+n}}_y\vert_{{\mu_{P}}(A^0(x))}= P(y)\mu_{\ast P}(D^{M_g}_y\vert_{A^0(x)})=\sum\limits_{i=0}^{g+n-1}y^i D^{M_{g+n}}_i\vert_{{\mu_{P}}(A^0(x))}, $$
alors 
\begin{align*}
 \Res_{y=0} \frac{P(y)\mu_{\ast P}(D^{M_g}_y\vert_{A^0(x)})}{y^i}&= \left[ \frac{D^{M_{g+n}}_y\vert_{{\mu_{P}}(A^0(x))}}{y^{i}}\right]_+- y\left[ \frac{D^{M_{g+n}}_y\vert_{{\mu_{P}}(A^0(x))}}{y^{i+1}}\right]_+, \\
&=\sum\limits_{j=i}^{g+n-1} y^{j-i} D^{M_{g+n}}_j\vert_{{\mu_{P}}(A^0(x))} -y \sum\limits_{j=i+1}^{g+n-1}y^{j-i-1} D^{M_{g+n}}_j\vert_{{\mu_{P}}(A^0(x))}, \\
&=D^{M_{g+n}}_i\vert_{{\mu_{P}}(A^0(x))}. 
\end{align*}

\end{proof}

\bigskip

Soit un polynôme $P\in \mathbb{C}^1[x]$ tel que $P(x)=\sum\limits_{i=0}^na_ix^i$, et soit $A^0(x) \in M_g$. 
Par le corollaire \ref{coco}, on a pour tout $0 \leqslant k \leqslant g+n-1$:
\begin{align}\nonumber 
D^{M_{g+n}}_{k}\vert_{{\mu_{P}}(A^0(x))}&= \Res_{y=0} \frac{\sum\limits_{i=0}^na_iy^i\mu_{\ast P}(D^{M_g}_y\vert_{A^0(x)})}{y^{k+1}}, \\ \nonumber
&= \left[ \frac{\sum\limits_{i=0}^na_iy^i\mu_{\ast P}(D^{M_g}_y\vert_{A^0(x)})}{y^{k}}\right]_+- y\left[ \frac{\sum\limits_{i=0}^na_iy^i\mu_{\ast P}(D^{M_g}_y\vert_{A^0(x)})}{y^{k+1}}\right]_+, \\ \nonumber
&=\left[ \frac{\sum\limits_{i=0}^n a_i y^i\sum\limits_{i=0}^n a_ix^i\sum\limits_{i=0}^{g-1} y^i D^{M_g}_i\vert_{A^0(x)}}{y^{k}}\right]_+- y\left[\frac{\sum\limits_{i=0}^na_iy^i\sum\limits_{i=0}^na_ix^i(\sum\limits_{i=0}^{g-1}D^{M_g}_i\vert_{A^0(x)})}{y^{k+1}}\right]_+, \\ \nonumber
&=\left[ \frac{\sum\limits_{i=0}^n a_ix^i\sum\limits_{i=0}^{g+n-1}y^i \sum\limits_{l+m=i} a_lD^{M_g}_m\vert_{A^0(x)}}{y^{k}}\right]_+- y\left[ \frac{\sum\limits_{i=0}^n a_ix^i\sum\limits_{i=0}^{g+n-1}y^i \sum\limits_{l+m=i} a_lD^{M_g}_m\vert_{A^0(x)}}{y^{k+1}}\right]_+, \\ \nonumber
&=\sum\limits_{i=0}^n a_ix^i \sum\limits_{l+m=k} a_lD^{M_g}_m\vert_{A^0(x)}, \\ \label{alg}
D^{M_{g+n}}_{k}\vert_{{\mu_{P}(A^0(x))}}&=\mu_{\ast P}\sum\limits_{l+m=k} a_l D^{M_g}_m\vert_{A^0(x)}, 
\end{align}
nous utiliserons l'égalité (\ref{alg}) dans la proposition \ref{prop1bis}.

\bigskip

On a vu précédemment que les applications $\rho$ et $\sigma$ définissent sur la variété $M_g$ deux
stratifications et la proposition suivante va nous permettre d'identifier la stratification algébrique $(S_{g, i})_{i \in\{0, ..., g-1\}}$ et la stratification algébrique $(M_{g, i})_{i\in I}$. 
\begin{prop}\label{prop1bis}
Soit $A^0(x)$ une matrice de $ M_g$ on a
$$\sigma(A^0(x)) = g-\rho(A^0(x)). $$

\end{prop}
\begin{proof}[Preuve]
Soit $A^0(x)$ une matrice de $ M_g$, dont $ \PGCD(A^0(x))$ est le polynôme unitaire $P(x)$
de degré $\rho(A^0(x))=n $. Il existe une unique matrice $A^1(x)$ de $ M_{g-n}$ telle que:
$$A^0(x)=P(x)A^1(x), $$
et le $ \deg \PGCD(A^1(x))=0$. 
Nous allons montrer que $\dim \left\langle D^{M_g}_0\vert_{A^0(x)}, \dots, D^{M_g}_{g-1}\vert_{A^0(x) }\right\rangle = g-\rho(A^0(x))$. \\
Pour tout $y\in \mathbb{C}$, on sait que
\begin{equation}\label{dmg}D^{M_g}_y\vert_{{\mu_{P}}A^1(x)}= \sum\limits_{i=0}^{g-1}y^iD^{M_g}_i\vert_{{\mu_{P}}A^1(x)} \; \text{ et } \; D^{M_{g-n}}_y\vert_{A^1(x)}=
\sum\limits_{i=0}^{g-n-1}y^iD^{M_{g-n}}_i\vert_{A^1(x)}. \end{equation}
En remplacent les deux égalités précédentes (\ref{dmg}) dans l'égalité (\ref{+}) de la proposition \ref{lem1} on obtient 
\begin{equation}\label{l. e}
 \sum\limits_{i=0}^{g-1}y^iD^{M_g}_i\vert_{{\mu_{P}}A^1(x)}= P(y)\sum\limits_{i=0}^{g-n-1}y^i\mu_{\ast P}D^{M_{g-n}}_i\vert_{A^1(x)}
\;. 
\end{equation}
%
L'égalité (\ref{alg}) implique que pour tout $0 \leqslant i \leqslant g$ les champs de vecteurs $D^{M_g}_i$ s'écrit en fonction de 
 $ (\mu_{P
 \ast} D^{M_{g-n}})_{0 \leqslant i \leqslant g-1}$, 
 alors 
\begin{equation*}\label{*} 
 \left\langle D^{M_g}_0\vert_{{\mu_{P}}A^1(x)}, \dots, D^{M_g}_{g-1}\vert_{{\mu_{P}}A^1(x)}\right\rangle =\left\langle \mu_{P
 \ast}D^{M_{g-n}}_0\vert_{A^1(x)}, \dots, \mu_{\ast P}D^{M_{g-n}}_{g-n-1}\vert_{A^1(x) }\right\rangle, \; 
\end{equation*}
par conséquent 
\begin{align*}
\dim\left\langle D^{M_g}_0\vert_{A^0(x)}, \dots, D^{M_g}_{g-1}\vert_{A^0(x)}\right\rangle =\dim\left\langle
\mu_{\ast P}D^{M_{g-n}}_0\vert_{A^1(x)}, \dots, \mu_{\ast P}D^{M_{g-n}}_{g-n-1}\vert_{A^1(x)}\right\rangle. 
\end{align*}
La théorème \ref{prop1} nous assure que les vecteurs $
D^{M_{g-n}}_0\vert_{A^1(x) }, \dots, D^{M_{g-n}}_{g-n-1}\vert_{A^1(x)}$ sont linéairement indépendants car le $\PGCD$ de $A^1(x)$ est de degré nul et du fait que l'application 
$\mu_P $ est injective alors 
l'action de $\mu_P $ sur une famille de champ de vecteurs préserve leur degré l'indépendance, on obtient ainsi que: 
 $$\dim \left\langle \mu_{\ast P}D^{M_{g-n}}_0\vert_{A^1(x) }, \dots, \mu_{\ast P}D^{M_{g-n}}_{g-n-1}\vert_{A^1(x) }\right\rangle=\sigma(A^0(x)) =
g-n. $$ On a donc
\begin{align*}
\sigma(A^0(x)) &= g-\rho(A^0(x)). 
\end{align*}

\end{proof}
La proposition \ref{prop1bis} implique que les deux stratifications $(S_{g, i})_{i \in\{0, ..., g-1\}}$ et $(M_{g, i})_{i \in I}$ de $M_g$ sont identiques $M_{g, i}=S_{g, i}
\neq \emptyset$ pour $0 \leqslant i \leqslant g-1$ 
. Ceci garantit que $I=\{0, \dots, g-1\}$. 
 
\bigskip

\begin{coro}\label{aze}
 Soit $P$ un polynôme unitaire de degré $n$. 
$ \text{ Pour tout } 0 \leqslant i \leqslant g$
:
\begin{align}\label{pa30}
 \mu_{P}(M_{g, i})&=M_{g+n, i} \cap \mu_{P}(M_{g}). 
\end{align}

\end{coro}
\begin{proof}[Preuve]
Soit $0 \leqslant i \leqslant g $. Pour toute matrice $A(x)\in M_{g, i}$ on a 
$\rho(A(x))=g-i$, la proposition \ref{prop1bis} implique que $\rho(P(x)A(x))=g+n
-i$, et entraine que $\mu_{P}(M_{g, i}) \subset M_{g+n, i} $, par conséquent 
\begin{equation}\label{inc1} \mu_{P}(M_{g, i})\subset M_{g+n, i} \cap \mu_{P}(M_{g}). \end{equation}
La variété quasi-affine $M_{g+n, i} \cap
\mu_{P}(M_{g})$ est composée par les matrices $B(x)$ de $M_{g+n}$ telles que $\PGCD(B(x))$ est un
multiple de $P(x)$ avec $\frac{B(x)}{P(x)}\in M_g$ et $\rho(\frac{B(x)}{P(x)})=g-i$ autrement dit $\frac{B(x)}{P(x)}\in M_{g, i}$, alors \begin{equation}\label{inc2}M_{g+n, i} \cap \mu_{P}(M_{g}) \subset \mu_{P}(M_{g, i}).\end{equation} 
Avec les deux inclusions (\ref{inc1}) et (\ref{inc2}) on obtient l'égalité voulue 
$\mu_{P}(M_{g, i})=M_{g+n, i} \cap \mu_{P}(M_{g}). $ 
\end{proof}

%
%
\bigskip

Rappelons que l'application $\mathbf{H}$ définie par (\ref{H}) est surjective, alors 
pour tout polynôme $h\in \mathbb{C}^1_{2g+1}[x]$, on note par $M_{g}(h)$ la fibre au dessus de $h$
. \\La fibre $M_{g}(h)= \mathbf{H}^{-1}(h)$ est une variété affine, car par définition $\mathbf{H}$ est
une application régulière.
\section{La stratification des fibres $M_{g}(h)$}\label{section4}
Dans cette section nous allons décrire la restriction des deux stratifications de la variété affine $M_{g}(h)$. La première stratification est héritée de la stratification de $M_{g}$ où les strates sont déterminées par $M_{g, i}\cap M_{g}(h)$, 
la seconde stratification sera une stratification plus fine qui combinera les caractéristiques géométriques et algébriques des deux stratifications définies à l'aide des application $\sigma$ et $\rho$ sur des fibres $M_{g}(h)$.
\subsection{}\label{intro-strati3}Introduisons quelques définitions. 
\begin{defi*}
Soit $h$ un polynôme de $\mathbb{C}^1_{2g+1}[x]$. Un polynôme unitaire $Q(x)$ de $\mathbb{C}^1[x]$ est appelé 
{un diviseur quadratique de} $h(x)$ si $Q^{2}$ divise $h(x)$. \\ 
On note par $\mathbb{C}[x]_{h}$ l'ensemble des diviseurs quadratiques de $h$ et on note par $\mathbb{C}[x]_{i, h}$ l'ensemble des diviseurs quadratiques de $h(x)$ de degré $i$. \\
On appelle 
 {le degré de non-régularité} de $h(x)$, le degré maximal des diviseurs quadratiques de $h$, noté $\Upsilon (h)$. \end{defi*}
 Observons que $\mathbb{C}[x]_{\Upsilon(h), h}$ est constitué d'un seul polynôme qu'on appellera le 
 {diviseur quadratique maximal de} $h$. On peut exprimer $\mathbb{C}[x]_{i, h}$ comme l'union disjointe suivante: 
 $$\mathbb{C}[x]_{h}= \bigsqcup\limits_{i=0}^{\Upsilon(h)}\mathbb{C}[x]_{i, h}. $$

\begin{lem}\label{X}
Soit $h$ un polynôme de $\mathbb{C}^1_{2g+1}[x]$. Pour toute matrice $A(x)\in M_{g}(h)$ on a: 
$$\rho(A(x)) \leqslant \Upsilon(h). $$
\end{lem}
\begin{proof}[Preuve]
Soit $A(x)$ une matrice de $M_{g}(h)$ avec $P(x)=\PGCD(A(x))$. Alors il existe une matrice
 $A^{1}(x)$ telle que $A(x)=P(x) A^{1}(x)$. Par conséquent, on a $h(x)=-\det(A(x))=-P^{2}(x) \det(A^{1}(x))$, alors le polynôme unitaire $P^2(x)$ divise $h(x)$ donc $P(x)\in \mathbb{C}[x]_{h}$, ce qui implique que $\rho(A(x))=\deg(P(x)) \leqslant \Upsilon(h)$. 
\end{proof}
Rappelons que pour tout ${0 \leqslant i \leqslant g}$, les matrices $A(x)$ composant les strates $S_{g, i} $ sont caractérisées par le degré du $\deg\PGCD(A(x))$, comme les strates de $M_{g, i}=S_{g, i}$ alors les matrices constituants $M_{g, i}$ sont aussi caractérisées par leur $\PGCD$. \\

Soit $h$ un polynôme de $\mathbb{C}^1_{2g+1}[x]$. On note
\begin{equation}\label{Okk}
M_{g, i}(h)=M_{g}(h) \cap M_{g, i}, \, \; \text{ pour } 0 \leqslant i \leqslant g
\end{equation} 
Les ensembles $\{M_{g, i}(h)\}_{0 \leqslant i \leqslant g}$ sont des variétés quasi-affines, car ils sont l'intersection d'une variété affine avec une variété quasi-affine. 
\begin{prop}\label{p@}
Soit $h \in \mathbb{C}^1_{2g+1}[x]$. L'ensemble $M_{g, i}(h)$ est non-vide si et seulement si $g- \Upsilon(h) \leqslant i \leqslant g$, et la fibre $M_{g}(h)$ est stratifiée par $\Upsilon(h)+1$ strates. 
\end{prop}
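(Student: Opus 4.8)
Le point de d\'epart sera la proposition \ref{prop1}: elle identifie les deux stratifications, de sorte qu'une matrice $A$ appartient \`a $M_{g,i}$ si et seulement si $\rho(A)=g-i$. Par cons\'equent $M_{g,i}(h)=M_g(h)\cap M_{g,i}$ est non vide si et seulement s'il existe $A\in M_g(h)$ avec $\rho(A)=g-i$. En posant $n:=g-i$, tout revient donc \`a d\'eterminer les entiers $n$ pour lesquels la fibre $M_g(h)$ contient une matrice de degr\'e de non-r\'egularit\'e exactement $n$.

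Pour la condition n\'ecessaire, je partirais d'une matrice $A\in M_g(h)$ avec $\rho(A)=n$. Le corollaire \ref{X} fournit aussit\^ot $n=\rho(A)\leqslant\rho(h)$, soit $i\geqslant g-\rho(h)$, tandis que $\rho(A)\geqslant 0$ donne $i\leqslant g$. D'o\`u l'encadrement $g-\rho(h)\leqslant i\leqslant g$.

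Pour la r\'eciproque, je fixerais $n\in\{0,\dots,\rho(h)\}$ et fabriquerais une matrice de $M_g(h)$ de degr\'e de non-r\'egularit\'e \'egal \`a $n$. Comme $n\leqslant\rho(h)$, le diviseur quadratique maximal de $h$ poss\`ede un facteur unitaire $Q$ de degr\'e $n$, et alors $Q^2$ divise $h$; j'\'ecrirais $h=Q^2\tilde h$ avec $\tilde h\in H_{g-n}$. L'\'etape d\'elicate, qui constitue le c{\oe}ur de la preuve, sera l'existence d'une matrice \emph{r\'eguli\`ere} $A^1\in M_{g-n}(\tilde h)$. Je la construirais par interpolation: choisir $u$ unitaire de degr\'e $g-n$ \`a racines simples $a_1,\dots,a_{g-n}$ n'annulant pas $\tilde h$, interpoler $v$ de degr\'e $\leqslant g-n-1$ tel que $v(a_j)^2=\tilde h(a_j)$, puis poser $w:=(\tilde h-v^2)/u$, qui est unitaire de degr\'e $g-n+1$. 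On a alors $v^2+uw=\tilde h$, donc $A^1\in M_{g-n}(\tilde h)$; et puisque $v(a_j)^2=\tilde h(a_j)\neq 0$, aucune racine de $u$ n'est racine commune de $u,v,w$, ce qui force $\PGCD(u,v,w)=1$, c'est-\`a-dire $A^1$ r\'eguli\`ere (le cas $g-n=0$ se traitant \`a part avec $u=1$, $v=0$, $w=\tilde h$).

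Il resterait \`a remonter par $\mu_Q$. D'apr\`es la proposition \ref{p33}, la matrice $A:=\mu_Q(A^1)=QA^1$ est dans $M_g$, et l'\'egalit\'e $(Qv)^2+(Qu)(Qw)=Q^2\tilde h=h$ montre que $A\in M_g(h)$; de plus $\PGCD(A)=Q\,\PGCD(A^1)=Q$, donc $\rho(A)=n$ et $A\in M_{g,g-n}(h)$, qui est donc non vide. L'encadrement $g-\rho(h)\leqslant i\leqslant g$ est ainsi n\'ecessaire et suffisant. Enfin, le nombre de strates non vides de $M_g(h)$ \'egale le nombre d'entiers $i$ de cet intervalle, soit $\rho(h)+1$, ce qui donne la seconde assertion.
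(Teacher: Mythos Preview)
Your proof is correct and follows essentially the same route as the paper: the necessity via $\rho(A)\leqslant\rho(h)$ (Corollary~\ref{X}), and the sufficiency by an explicit Lagrange-interpolation construction of a matrix with prescribed $\PGCD$. The only cosmetic difference is that you first build the regular matrix $A^1\in M_{g-n}(\tilde h)$ and then lift it through $\mu_Q$, whereas the paper writes $u=Qu'$, $v=Qv'$, $w=Qw'$ directly in $M_g$; the two constructions are the same computation.
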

\begin{proof}[Preuve]
Soit $g- \Upsilon(h) \leqslant i \leqslant g$. Montrons que $M_{g, i}(h) \neq \emptyset$. Pour cela, nous allons construire une matrice $A(x)=\left(\begin{array}{cc}
v(x) & u(x) \\ 
w(x)& -v(x)
\end{array} \right) \in M_{g}(h)$ telle que $\rho(A(x))=g-i$. Rappelons que $\det(A(x))=h(x)$. \\
Soit $Q(x)=\prod\limits_{j=1}^{g-i}(x-\alpha_{j})$ un polynôme de $\mathbb{C}[x]_{g-i, h}$
, tel que $h(x)=Q^2(x)h'(x)$ avec $h'\in \mathbb{C}^1_{i}[x]$. \\
Choisissons $(a_{j})_{1 \leqslant j \leqslant i}$ une famille de $i$ éléments distincts de $ \mathbb{C}$ telle que $h(a_{j})\neq 0$ pour tout $1 \leqslant j \leqslant i$. On définit le polynôme $u(x) \in \mathbb{C}^1_g[x]$ comme il suit 
\begin{align*}
 u(x)&=Q(x)\underbrace{\prod\limits_{j=1}^{i}(x-a_{j})}_{u'(x)}. \\
\end{align*}
Le polynôme $v(x)$ est défini par les équations (\ref{polyv}). 
On a alors
\begin{align*}
(v(\alpha_j)-\sqrt{h(\alpha_j)})=0 \;\;\; 
\, \text{ pour tout }1 \leqslant j \leqslant g- i. 
\end{align*}
Pour tout $1 \leqslant j \leqslant g- i$, $\alpha_j$ est une racine de $h(x)$ donc une racine de $v(x)$ car $v(\alpha_j)=\sqrt{h(a_j)}=0$. 
Si un élément 
$\alpha_\ell$ est répété $k$ dans $(\alpha_{j})_{1 \leqslant j \leqslant g-i}$, 
alors en vertu des équations (\ref{polyv}) on a
\begin{align*}
\frac{d^m}{dt^m}(v(x)-\sqrt{h(x)}) \Big\rvert_{x=\alpha_\ell}=0 \;\;\; 
\, \text{ pour tout }0 \leqslant m \leqslant k. 
\end{align*}
comme $a_{\ell}$ est une racine de $h(x)$ de multiplicité au moins $k$, alors $\frac{d^m}{dt^m}h(x)\Big\rvert_{x=\alpha_\ell}=0$ et par conséquent $\frac{d^m}{dt^m}v(x)\Big\rvert_{x=\alpha_\ell}=0$, ainsi on a 
 \begin{align*}
 v(x)&=\prod_{ j=1}^{g-i}(x-\alpha_j)v'(x), \\
 v(x)&=Q(x)v'(x), 
 \end{align*}
où $v'(x)$ est un polynôme de $\mathbb{C}_{i-1}[x]$ tel que:
\begin{align*}
\begin{array}{l}
(Q(a_j)v'(a_j)-Q(a_j)\sqrt{h'(a_j)})=0, \\
(v'(a_j)-\sqrt{h'(a_j)})=0, 
\end{array}
\end{align*}
on a 
\begin{align*}
 v'(a_j)=\sqrt{h'(a_j)} \neq 0.
\end{align*}
Par définition, le polynôme unitaire $w(x)$ de degré $g+1$ est le quotient suivant:
\begin{align*}
w(x)&=\frac{h(x)-v^{2}(x)}{u(x)}=Q(x)\underbrace{\frac{h'(x)-v'^{2}(x)}{u'(x)}}_{w'(x)}. 
\end{align*}
Comme les racines $(a_{j})_{1 \leqslant j \leqslant i}$ du polynôme $u'(x)$ n'annule pas le polynôme $v'(x)$, alors $\PGCD(u'(x), v'(x))=1$, ceci implique que $\PGCD(u'(x), v'(x), w'(x))=1$, donc
\begin{eqnarray*}
 \PGCD(A(x))&=&\PGCD(Q(x)u'(x), Q(x)v'(x), Q(x)w'(x))\\
 &=&Q(x)\PGCD(u'(x), v'(x), w'(x))=Q(x)\;, 
\end{eqnarray*}
ainsi $\rho(A(x))=\deg(Q(x))=g-i$ de ce fait $A(x)\in M_{g, i}(h)$, d'où l'ensemble $M_{g, i}(h) $ est non-vide
. 

\bigskip

Nous montrerons que pour $i<g-\Upsilon(h)$, alors $M_{g, i}(h)=\emptyset$. 
Supposons que $M_{g, i}(h)$ est non vide alors il existe une matrice $A(x)$ telle que $ i=g-\rho(A(x))$ alors 
\begin{align*}
g- \rho(A(x)) & \leqslant g-\Upsilon(h), \\
\rho(A(x)) &\geqslant \Upsilon(h). 
\end{align*}
 ceci est impossible car c'est en contradiction avec le lemme (\ref{X}) qui nous assure que $\rho(A(x)) \leqslant \Upsilon(h)$ pour tout $A(x) \in M_{g}(h)$. Par conséquent $M_{g, i}(h)=\emptyset$ pour $i<g-\Upsilon(h)$, alors on a $\Upsilon(h) + 1$ strates.

\end{proof}

\begin{prop}\label{p39}
Soit $h\in \mathbb{C}^1_{2g+1}[x]$. La famille $(M_{g, i}(h))_{g-\Upsilon(h) \leqslant i \leqslant g}$ est une stratification de la fibre $M_{g}(h)$. 
\end{prop}
\begin{proof}[Preuve]
La variété affine $M_{g}(h)$ est un fermé de $M_{g}$. donc toute stratification de $M_{g}$ induit une
stratification de $M_{g}(h)$. 
Notons que les champs de vecteurs $(D_i)_{0 \leqslant i \leqslant g-1}$ sont stables sur la fibre $M_{g}(h)$ car pour tout $A(x)\in M_g(h)$ et $0 \leqslant i \leqslant g-1$ on a 
\begin{align*}
D_i\Big\rvert_{A(y)}(h(x))&=\{h_i\Big\rvert_{A(y)}, h(x)\}, \\
&=\{h_i\Big\rvert_{A(y)}, \sum\limits_{j=0}^{2g+1}h_jx^j\}, \\
&=\sum\limits_{j=0}^{2g+1}x^j\{h_i\Big\rvert_{A(y)}, h_j\}
\end{align*}
Comme $(M_g, \{\cdot, \cdot\}, \mathbf{H})$ est un système intégrable alors la famille $(h_0, \dots, h_{2g+1})$ est involutive donc 
\begin{align*}
D_i\Big\rvert_{A(y)}(h(x))&=0, 
\end{align*}
Ainsi $D_i$ est stable sur la fibre $M_{g}(h)$. 
D'après la proposition \ref{p@}, l'intersection des strates de la
stratification $(M_{g, i})_{0 \leqslant i \leqslant g}$ de $M_{g}$ avec $M_{g}(h)$ définit la stratification
$(M_{g, i}(h))_{g-\Upsilon(h) \leqslant i \leqslant g}$ de $M_{g}(h)$. 
\end{proof}
\textbf{Remarque. }
Si $h\in \mathbb{C}^1_{2g+1}[x] $ est tel que $\Upsilon(h)=0$, d'après la proposition \ref{p39} la stratification se réduit à une
seule strate $M_{g, g}(h)$, et les $g$ champs de vecteurs $D_0, \dots, D_{g-1}$ sont linéairement indépendants aux points
$A(x)\in M_{g, g}(h)=M_g(h)$. 

\bigskip

\subsection{}\label{strati3}Nous allons établir une stratification plus fine des fibres $M_g(h)$ où $h\in
\mathbb{C}^1_{2g+1}[x]$. 
\\
Soit $h\in \mathbb{C}^1_{2g+1}[x]$, on a vu que si $\Upsilon(h)=0$, nous obtenons une stratification avec
une seule strate qui est toute la fibre. La stratification plus fine que nous allons définir coincide dans le cas
$\Upsilon(h)=0$ avec la stratification originale; nous allons exclure ce cas dans ce qui suit et supposer que $h\in \mathbb{C}^1_{2g+1}[x]$ avec $\Upsilon(h)\geqslant 1$. 

\medskip
\begin{defi*}
 Soit $Q\in \mathbb{C}[x]_{h}$ un polynôme unitaire dont le carré divise $h$. On note par $M_{g, Q}(h)$ l'ensemble des matrices 
$A(x)= \left(\begin{array}{cc}
v(x) & u(x) \\ 
w(x)& -v(x)
\end{array} \right)\in M_{g}(h)$ tel que $Q(x)$ divise 
$\PGCD(A)$. 
\end{defi*}
Une matrice $A(x)= \left(\begin{array}{cc}
v(x) & u(x) \\ 
w(x)& -v(x)
\end{array} \right)\in M_{g, Q}(h)$ alors $A(x)= Q(x)\left(\begin{array}{cc}
v'(x) & u'(x) \\ 
w'(x)& -v'(x)
\end{array} \right)$ avec $\left(\begin{array}{cc}
v'(x) & u'(x) \\ 
w'(x)& -v'(x)
\end{array} \right)\in M_{g-\deg (Q)}(\frac{h}{Q^2})$. 

\begin{prop}\label{p}
Soit $Q(x)$ un polynôme de $\mathbb{C}[x]_{h}$. Le sous-ensemble $M_{g, Q}(h)$ de $ M_{g}(h)$ est un fermé de Zariski
non-vide de $M_{g}$. 
\end{prop}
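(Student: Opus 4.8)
The plan is to identify $M_{g,Q}(h)$ with the image under $\mu_Q$ of a fiber of a Mumford system of strictly smaller order, and then to read off both closedness and non-emptiness from this description. Set $n=\deg Q$; since $Q^2\mid h$ and $\deg h=2g+1$ we have $n\leqslant g$, and $h':=h/Q^2$ is a monic polynomial of degree $2(g-n)+1$, that is $h'\in H_{g-n}$. Writing $\mu_Q$ for the map $M_{g-n}\to M_g$, $B\mapsto QB$, the core of the argument is the identity
$$M_{g,Q}(h)=\mu_Q\bigl(M_{g-n}(h')\bigr).$$
To establish it, I observe that for $A\in M_g$ with entries $u,v,w$ one has $\PGCD(A)=\PGCD(u,v,w)$, so $Q\mid\PGCD(A)$ if and only if $Q$ divides each of $u,v,w$. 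In that case $A=QB$ with $B=A/Q$, and a degree count ($u$ monic of degree $g$, $w$ monic of degree $g+1$, $\deg v<g$) shows $B\in M_{g-n}$; moreover $-\det B=(-\det A)/Q^2=h/Q^2=h'$, hence $B\in M_{g-n}(h')$. Conversely, for any $B\in M_{g-n}(h')$ the matrix $QB$ lies in $M_g$, satisfies $-\det(QB)=Q^2h'=h$, and has all its entries divisible by $Q$, so $QB\in M_{g,Q}(h)$.

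For closedness I would argue directly. The fiber $M_g(h)$ is the common zero locus of the polynomial components of $\mathbf H-h$, hence a Zariski closed affine subvariety of $M_g$, as already used in the proof of Proposition \ref{p39}. For fixed monic $Q$ the three conditions $Q\mid u$, $Q\mid v$, $Q\mid w$ are each equivalent to the vanishing of the remainder of Euclidean division by $Q$, whose coefficients are affine-linear functions of the coefficients of $u,v,w$; thus $\{A\in M_g:\ Q\mid u,\ Q\mid v,\ Q\mid w\}$ is an affine-linear subspace, in particular Zariski closed. Since $M_{g,Q}(h)$ is the intersection of $M_g(h)$ with this subspace, it is closed in $M_g$. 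Equivalently, via Proposition \ref{p33}, $\mu_Q$ is an affine isomorphism onto the closed subspace $\mu_Q(M_{g-n})$, and $M_{g-n}(h')$ is closed in $M_{g-n}$, so its image is closed in $M_g$.

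For non-emptiness I would combine the identity with the fact that the smaller fiber $M_{g-n}(h')$ is non-empty. The cleanest source is Proposition \ref{p@} applied to the Mumford system of order $g-n$ and to $h'\in H_{g-n}$: its top (regular) stratum $M_{g-n,\,g-n}(h')$ is non-empty, since the bound $(g-n)-\rho(h')\leqslant g-n$ always holds. Picking any $B\in M_{g-n}(h')$, the matrix $\mu_Q(B)=QB$ lies in $M_{g,Q}(h)$, which is therefore non-empty.

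The routine part is closedness, which reduces to the affine-linearity of the remainder map together with the already-established closedness of $M_g(h)$. The point demanding care is the identification $M_{g,Q}(h)=\mu_Q(M_{g-n}(h'))$, and in particular the degree bookkeeping guaranteeing that $A/Q$ genuinely lands in $M_{g-n}$ (correct leading terms and $\deg v<g-n$) and the determinant computation $-\det(A/Q)=h'$; once this is in place both assertions follow, non-emptiness by citing Proposition \ref{p@} and closedness by the linear description.
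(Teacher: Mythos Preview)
Your proof is correct and follows essentially the same route as the paper: both establish the identity $M_{g,Q}(h)=\mu_Q\bigl(M_{g-n}(h/Q^2)\bigr)$ (the paper arrives at it via the intermediate observation $M_{g,Q}(h)=M_g(h)\cap\mu_Q(M_{g-n})$), deduce closedness from the affinity of $\mu_Q$ (Proposition~\ref{p33}) and the closedness of $M_g(h)$, and obtain non-emptiness from the non-emptiness of the smaller fiber. The only cosmetic difference is that the paper invokes the surjectivity of $\mathbf{H}$ for this last step, whereas you cite Proposition~\ref{p@}; since the proof of Proposition~\ref{p@} is precisely what establishes that surjectivity, the two justifications amount to the same thing.
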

\begin{proof}[Preuve]
Soit $\deg (Q)=i$, observons que 
\begin{align}\label{etc}
 M_{g, Q}(h)= \mu_{Q}(M_{g-i}(\frac{h}{Q^{2}})). 
\end{align}
Étant donné que $M_{g-i}(\frac{h}{Q^{2}})$ est un fermé de Zariski de $M_{g}$, son image par l'application multiplication 
$\mu_{Q}$ est un fermé de Zariski de $M_{g}$. 
\\
Vérifions que le fermé $M_{g, Q}(h)\neq \emptyset$. L'application $\mathbf{H}$ est surjective, alors $ M_{g-i}(\frac{h}{Q^{2}})$ la fibre au dessus de $\frac{h}{Q^{2}}$ est non-vide, entraine que son image par $ \mu_{Q}$ est non vide, par l'egalité (\ref{etc}) $M_{g, Q}(h)\neq \emptyset $. 
\end{proof}

\goodbreak

\begin{prop}
La strate $M_{g, g} (h)$ est un ouvert de Zariski de $M_g(h)$ dont le bord topologique est $\bigcup\limits_{Q\in
\mathbb{C}[x]_{1, h}}M_{g, Q}(h)$. 
\end{prop}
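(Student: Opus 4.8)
Le plan est de scinder la preuve en deux parties correspondant aux deux assertions: l'ouverture de $M_{g,g}(h)$ dans $M_g(h)$, puis la description de son bord. La clé sera d'utiliser systématiquement l'identité $\sigma(A)=g-\rho(A)$ de la proposition \ref{prop1}, qui traduit la condition « $A$ appartient à la strate maximale » en la condition de régularité « $\rho(A)=0$ ».

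Pour l'ouverture, je remarquerais que $M_{g,g}=\{A\in M_g\mid \rho(A)=0\}=S_{g,g}$ est l'ensemble des matrices régulières de $M_g$. D'après la proposition \ref{p30}, l'ensemble $\rho^{-1}(\{1,\dots,g\})=\bigsqcup_{j=0}^{g-1}S_{g,j}$ est un fermé de Zariski de $M_g$; son complémentaire $M_{g,g}$ est donc un ouvert de Zariski de $M_g$. Par suite $M_{g,g}(h)=M_g(h)\cap M_{g,g}$ est l'intersection du fermé $M_g(h)$ avec cet ouvert, d'où son caractère d'ouvert de Zariski de $M_g(h)$.

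Pour le bord, je m'appuierais sur la proposition \ref{p39}, selon laquelle $(M_{g,i}(h))_{g-\rho(h)\leqslant i\leqslant g}$ est une stratification de $M_g(h)$. Comme $g$ est l'indice maximal, la propriété de stratification donne $\overline{M_{g,g}(h)}=\bigsqcup_{j\leqslant g}M_{g,j}(h)=M_g(h)$, de sorte que le bord s'écrit $\partial M_{g,g}(h)=M_g(h)\setminus M_{g,g}(h)=\{A\in M_g(h)\mid \rho(A)\geqslant 1\}$. Il resterait alors à prouver par double inclusion l'égalité $\{A\in M_g(h)\mid \rho(A)\geqslant 1\}=\bigcup_{Q\in\mathbb{C}[x]_{1,h}}M_{g,Q}(h)$. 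L'inclusion de droite à gauche est immédiate: si $Q\in\mathbb{C}[x]_{1,h}$ divise $\PGCD(A)$, alors $\rho(A)=\deg(\PGCD(A))\geqslant\deg(Q)=1$. Pour l'inclusion de gauche à droite, je poserais $P=\PGCD(A)$, de degré au moins $1$; le corollaire \ref{X} garantit que $P^2$ divise $h$, et en fixant une racine $\alpha$ de $P$, le polynôme unitaire $Q=x-\alpha$ vérifie $Q^2\mid P^2\mid h$, donc $Q\in\mathbb{C}[x]_{1,h}$, tandis que $Q\mid P=\PGCD(A)$ donne $A\in M_{g,Q}(h)$.

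La principale subtilité résidera dans l'étape de densité: il faut bien invoquer la proposition \ref{p39} pour obtenir $\overline{M_{g,g}(h)}=M_g(h)$ et ainsi identifier le bord au complémentaire $M_g(h)\setminus M_{g,g}(h)$. Une fois cette identification acquise, le reste n'est qu'une manipulation élémentaire de divisibilités fondée sur le corollaire \ref{X}, le passage d'un diviseur quadratique $P^2\mid h$ de degré quelconque à un diviseur quadratique linéaire $(x-\alpha)^2\mid h$ se faisant simplement en choisissant une racine de $P$.
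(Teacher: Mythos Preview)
Your proof is correct. It follows the same overall plan as the paper --- identify $M_{g,g}(h)$ with $\{A\in M_g(h)\mid\rho(A)=0\}$ via Proposition~\ref{prop1}, and its complement with $\bigcup_{Q\in\mathbb{C}[x]_{1,h}}M_{g,Q}(h)$ --- but differs in which auxiliary results are invoked. The paper obtains openness by showing directly that the complement is closed, using Proposition~\ref{p} (each $M_{g,Q}(h)$ is Zariski closed, hence so is their finite union); you instead deduce openness from Proposition~\ref{p30} applied to $M_{g,g}=S_{g,g}$, and then invoke Proposition~\ref{p39} explicitly for the density $\overline{M_{g,g}(h)}=M_g(h)$, a step the paper leaves implicit. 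Your double-inclusion argument is also spelled out in more detail than the paper's one-line assertion; one minor point is that the fact $\PGCD(A)^2\mid h$ you attribute to Corollary~\ref{X} is actually contained in its \emph{proof} rather than its statement, though the computation $h=-\det(A)=\PGCD(A)^2\cdot(-\det(A'))$ is in any case immediate.
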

\begin{proof}[Preuve]

La proposition \ref{p@} nous assure que la strate $$M_{g, g} (h)=\left\{A \in M_g(h) \mid \rho(A(x))=0\right\} \neq \emptyset. $$
Le complémentaire de la strate 
$M_{g, g} (h)$ dans
$M_{g}(h)$ est constitué des matrices $A(x)$ telles que $\rho(A(x))>0$, c'est-à-dire des matrices de
$\bigcup\limits_{Q\in \mathbb{C}[x]_{1, h}}M_{g, Q}(h)$. L'ensemble $\bigcup\limits_{Q\in
 \mathbb{C}[x]_{1, h}}M_{g, Q}(h)$ est un fermé de Zariski car par la proposition \ref{p} il est l'union finie de fermés 
 de $M_{g}(h)$. Ainsi, $M_{g, g}(h)$ est un ouvert de Zariski non-vide de $M_{g}(h)$ de bord $\bigcup\limits_{Q\in
 \mathbb{C}[x]_{1, h}}M_{g, Q}(h)$. 
\end{proof}

\begin{theo}\label{t43}
Soit $0 \leqslant i \leqslant \Upsilon(h) $. 
La strate $M_{g, g-i}(h)$ est l'union disjointe suivante: 
\begin{equation}\label{u*}M_{g, g-i}(h)=\bigsqcup_{Q\in \mathbb{C}[x]_{i, h}}\mu_{Q}M_{g-i, g-i}\left(\frac{h}{Q^{2}}\right). \end{equation}
\end{theo}

\begin{proof}[Preuve]
Soit $0 \leqslant i \leqslant \Upsilon(h)$. 
Pour tout $A(x), B(x)\in M_{g, g-i}(h)$ 
dont $ \PGCD A(x) \neq \PGCD B(x) $. Les entrées de $A(x)$ respectivement $B(x)$ ont un diviseur commun unique $Q(x)= \PGCD A(x)$ (respectivement $Q'(x) = \PGCD B(x)$) de degré $i$. Ainsi le carré du polynôme $Q(x)$ (respectivement $Q'(x)$) divise le déterminant $-h(x)$; par conséquent $A(x)$ appartient uniquement à $ \mu_{Q}M_{g-i, g-i}\left(\frac{h}{Q^{2}}\right)$ tandis que $B(x)$ appartient uniquement à $ \mu_{Q'}M_{g-i, g-i}\left(\frac{h}{Q'^{2}}\right)$ alors 
 $ \mu_{Q}M_{g-i, g-i}\left(\frac{h}{Q^{2}}\right) \cap \mu_{Q'}M_{g-i, g-i}\left(\frac{h}{Q'^{2}}\right)= \emptyset $, d'où l'union disjointe 
$$M_{g, g-i}(h)=\displaystyle{\bigsqcup\limits_{Q\in \mathbb{C}[x]_{i, h}}\mu_{Q }M_{g-i, g-i}\left(\frac{h }{Q^{2}}\right)}. $$
\end{proof}



\begin{defi*}\upshape
 Soit $1 \leqslant i \leqslant \Upsilon(h)$ et soit $Q(x)$ un polynôme de $\mathbb{C}[x]_{i, h}$. Pour tout $k \leqslant g-i$, on note 
 $$M_{g, k, Q}(h)=M_{g, Q}(h)\cap M_{g, k}(h). $$
Notez que si $ k > g-i$ alors $M_{g, Q}(h)\cap M_{g, k}(h)=\emptyset$. 
\end{defi*}

Soit $1 \leqslant i \leqslant \Upsilon(h)$ et soit $Q_{1}$ un polynôme de $\mathbb{C}[x]_{i, h}$. L'application $\mu_{Q_{1}}$ est définie comme il suit:
\begin{align}\label{53}
\begin{array}{cccl}
\mu_{Q_{1}}:& M_{g-i, g-i}(\frac{h}{Q_{1}^{2}}) &
{\longrightarrow} & M_{g, g-i, Q_{1}}(h)\\
 &A(x) & {\longrightarrow}& \mu_{Q_{1}}(A(x)). 
\end{array}
\end{align}
On note que 
$ M_{g, g-i, Q_{1}}(h)=M_{g, Q_{1}}(h)\cap M_{g, g-i}(h)$, et
en vertu de théorème \ref{t43} on a $M_{g, g-i}(h)=\displaystyle{\bigsqcup\limits_{Q\in \mathbb{C}[x]_{i, h}}\mu_{Q }M_{g-i, g-i}\left(\frac{h }{Q^{2}}\right)}$. Par conséquent, 
 $$M_{g, g-i, Q_{1}}(h)=M_{g, Q_{1}}(h)\cap \displaystyle{\bigsqcup\limits_{Q\in \mathbb{C}[x]_{i, h}}\mu_{Q }M_{g-i, g-i}\left(\frac{h }{Q^{2}}\right)}=\mu_{Q_{1} }M_{g-i, g-i}\left(\frac{h }{Q_{1}^{2}}\right). 
$$
On sait que $\mu_{Q_{1}}$ est un isomorphisme sur son image 
, alors $M_{g, g-i, Q_{1}}(h)$ est isomorphe à $M_{g-i, g-i}\left(\frac{h }{Q_{1}^{2}}\right)$. 

Nous allons introduire quelques notations qui seront utiles pour décrire une stratification plus fine de
$M_{g}(h)$. \\
Pour tout $1 \leqslant i \leqslant \Upsilon(h)$ on a par définition, $\mathbb{C}_{i, h}[x]$ est la famille finie de diviseurs quadratiques de degré $i$ de $h$. Cette famille sera notée $\displaystyle{\{Q_{j}^{(i)}(x)\}_{ 1 \leqslant j \leqslant n_i}}$. 
S'il n'y a pas de confusion et pour alléger les notations, on notera $\displaystyle{M_{g, g-i, Q_{j}^{(i)}}(h)}$ par $\displaystyle{M_{g;Q_{j}^{(i)}}(h)}$. 
\begin{defi*}\upshape
L'ensemble $\mathbb{C}[x]_{h}$ admet deux relations d'ordre $ \leqslant $ et $<$ définies de la manière suivante: pour tout polynôme unitaire $Q(x)$ et $P(x)$ de $\mathbb{C}[x]_{h}$
$$\begin{array}{cl} Q(x) \geqslant P (x)&\text{ si } Q(x) \text{ divise } P(x). \\
Q(x)>P(x) &\text{ si } \deg(Q(x))< \deg(P(x)) \text{ et } Q(x) \text{ divise } P(x) \end{array}$$ 
 \end{defi*}

\begin{theo}\label{n}
 La famille $\displaystyle{\left[ (M_{g, g-i, Q_{j}^{(i)}}(h))_{Q_{j}^{(i)}\in\mathbb{C}[x]_{h} }\right]_{1 \leqslant i \leqslant \Upsilon(h)}} $ est une stratification
 de $M_{g}(h)$. 
\end{theo}
\begin{proof}[Preuve]
Montrons que $\displaystyle{\left[ (M_{g, g-i, Q_{j}^{(i)}}(h))_{Q_{j}^{(i)}\in\mathbb{C}[x]_{h} }\right]_{1 \leqslant i \leqslant \Upsilon(h)}}$ est une partition de $M_{g}(h)$. 
Une matrice $A(x)$ de $M_g(h)$, admet un unique $\PGCD$, par conséquent elle ne peut appartenir qu'à un unique sous ensemble $M_{g, g-\rho(A), \PGCD(A)}(h)$. On conclut que les sous ensembles $\displaystyle{(M_{g, g-i, Q_{j}^{(i)}}(h))_{Q_{j}^{(i)}\in\mathbb{C}[x]_{h}}}$ sont disjoints. \\
L'isomorphisme (\ref{53}) implique que $\displaystyle{M_{g, g-i, Q_{j}^{(i)}}(h)\simeq \mu_{Q_{j}^{(i)}}M_{g-i, g-i, }\left(\frac{h}{Q_{j}^{(i)2}}\right)} $, et en vertu du théorème \ref{t43} on a:
\begin{align}\label{ff}
 M_{g, g-i}(h)=\bigsqcup_{Q_{j}^{(i)}\in \mathbb{C}[x]_{i, h}}M_{g, g-i, Q_{j}^{(i)}}(h). 
\end{align}
La proposition \ref{p39} nous affirme que
\begin{align}\label{fff}
M_{g}(h) = \bigsqcup\limits_{ i =0}^{g-1} M_{g, i}(h). 
\end{align}
En combinant les deux égalités (\ref{ff}) et (\ref{fff}), on obtient
\begin{align*}
M_{g}(h) &= \displaystyle{ \bigsqcup\limits_{ i =0}^{g-1} \bigsqcup\limits_{Q_{j}^{(i)}\in \mathbb{C}[x]_{i, h}}M_{g, g-i, Q_{j}^{(i)}}(h)}= \displaystyle{\bigsqcup\limits_{Q_{j}^{(i)}\in \mathbb{C}[x]_{h}}M_{g, g-i, Q_{j}^{(i)}}(h)}. 
\end{align*}
On conclue que la famille $\displaystyle{(M_{g, g-i, Q_{j}^{(i)}}(h))_{Q_{j}^{(i)}\in \mathbb{C}[x]_{h} }} $ est bien une partition de $M_{g}(h)$. 

\bigskip

Montrons maintenant que $\overline{M_{g, g-i, Q_{j}^{(i)}}(h)}=\displaystyle{ \bigsqcup\limits_{ Q_{j'}^{(i')} \leqslant Q_{j}^{(i)}}M_{g, g-i', Q_{j'}^{(i')}}(h)}$. 
Commençons par montrer l'égalité suivante: 
\begin{align}\label{las}
 M_{g, Q_{j}^{(i)}}(h)=\displaystyle{ \bigsqcup\limits_{ Q_{j'}^{(i')} \leqslant Q_{j}^{(i)}}M_{g, g-i', Q_{j'}^{(i')}}(h)}. 
 \end{align}
Par définition, $M_{g, Q_{j}^{(i)}}$ est l'ensemble des matrices $A(x)\in M_{g}(h)$ telles que $ Q_{j}^{(i)}(x)$ divise
$\PGCD(A(x))$. Aussi par définition l'ensemble $M_{g, g-i', Q_{j'}^{(i')}}(h)$ est constitué des matrices $A(x)\in M_{g}(h)$
telles que $\PGCD(A(x))=Q_{j'}^{(i')}(x)$. Comme $Q_{j'}^{(i')}(x) \leqslant Q_{j}^{(i)}(x)$, le polynôme $Q_{j}^{(i)}(x)$ divise $Q_{j'}^{(i')}(x), $ on obtient alors $$\displaystyle{
 \bigsqcup\limits_{ Q_{j'}^{(i')} \leqslant Q_{j}^{(i)}}M_{g, g-i', Q_{j'}^{(i')}}(h)}= M_{g, Q_{j}^{(i)}}(h). $$ 
 Le complémentaire de $M_{g, g-i, Q_{j}^{(i)}}(h)$ dans $M_{g, Q_{j}^{(i)}}(h)$ est
 $$\displaystyle{ \bigsqcup\limits_{ Q_{j'}^{(i')}< Q_{j}^{(i)}}M_{g, g-i', Q_{j'}^{(i')}}(h)}= \displaystyle {\bigsqcup\limits_{i'> i}M_{g, g-i'} \cap M_{g, Q_{j}^{(i)}}(h)}
. $$ 
On sait que
$\bigsqcup\limits_{i'> i}M_{g, g-i'}$ est un fermé de Zariski, alors l'intersection $\displaystyle {\bigsqcup\limits_{i'> i}M_{g, g-i'} \cap M_{g, Q_{j}^{(i)}}(h)}$ est un
fermés de Zariski de $M_{g, Q_{j}^{(i)}}(h)$, et l'ensemble ${M_{g, g-i, Q_{j}^{(i)}}(h)}$ est un ouvert de Zariski dans un fermé de Zariski.
 Donc $$\overline{M_{g, g-i, Q_{j}^{(i)}}(h)}=\displaystyle{ \bigsqcup\limits_{
 Q_{j'}^{(i')} \leqslant Q_{j}^{(i)}}M_{g, Q_{j'}^{(i')}}(h)}. $$ On conclut que la famille
$\displaystyle{(M_{g, g-i, Q_{j}^{(i)}}(h))_{Q_{j}^{(i)} \in\mathbb{C}[x]_{h} }} $ est une stratification de
$M_{g}(h)$. 
\end{proof}

\bigskip

\begin{defi*}
Les strates $M_{g, g-i, Q_{j}^{(i)}}(h)$ de la stratification $\displaystyle{\left[ (M_{g, g-i, Q_{j}^{(i)}}(h))_{Q_{j}^{(i)}\in\mathbb{C}[x]_{h} }\right]_{1 \leqslant i \leqslant \Upsilon(h)}} $ 
 de $M_{g}(h)$. sont appelée les strates \emph{fines} 
 du système de
 Mumford d'ordre $g$. 
\end{defi*}

\bigskip

Chaque fibre $M_{g}(h)$ du système de Mumford d'ordre $g$ admet donc la stratification définie ci-dessous 
$$\displaystyle{\left(M_{g, g-i, Q_{j}^{(i)}}(h)\right)_{Q_{j}^{(i)}\in\mathbb{C}[x]_{h} }}, $$ où les strates 
 sont caractérisées par le degré de liberté des champs de vecteurs $(D_{i})_{ 0 \leqslant i \leqslant g-1}$ qui est la dimension des espaces vectoriels 
 $\langle D_{i} \, | \, 0 \leqslant i \leqslant g-1 \rangle$ et par les
diviseurs quadratiques de $h$.

\bigskip

\subsection{}\label{lissitude} Lissitude des strates. 
Dans ce paragraphe, nous allons déterminer les singularités de chaque fibre $M_{g}(h)$, ainsi que la fermeture de
chaque strate $M_{g, i}(h)$ de $M_{g}(h)$. Pour ce faire nous allons déterminer, en tout point le rang de la matrice
jacobienne de l'application moment $ \mathbf{H}$. Nous pourrons conclure que la dimension de chaque fibre $M_g(h)$
est égale à $g$. 

\medskip

Chaque fibre $M_{g}(h)$ de l'espace de phase $M_{g}$ est de dimension au moins $g$ et est munie de $g$ champs de
vecteurs $(D_{i})_{ 0 \leqslant i \leqslant g-1}$. Elle admet une seule strate $M_{g, g}$ où les champs de vecteurs sont linéairement
indépendants. qu'on appelle $M_{g, g}(h)$ la strate \emph{maximale} de $M_{g}(h)$.

\smallskip

Rappelons qu'on note par $ \mathbf{H}$ l'application polynomiale surjective définie par (\ref{H}); 
où pour tout $ A(x)=\left(\begin{array}{cc} v(x) & u(x) \\w(x)&
 -v(x)\end{array}\right)\in M_{g} $ on a $\mathbf{H}(A(x))= -\det(A(x))= x^{2g+1}+\sum\limits_{i=0}^{2g}h_i(A(x))x^i $, 
est un polynôme en $x$ de degré $2g+1$;
ses coefficients $(h_{i})_{ 0 \leqslant i \leqslant 2g}$ sont les fonctions polynomiales en fonction des fonctions coordonnées $\left[ (u_{i})_{ 0 \leqslant i \leqslant g-1}, \right. $ $
(v_{i})_{ 0 \leqslant i \leqslant g-1}, $ $\left. (w_{i})_{ 0 \leqslant i \leqslant g}\right]$ de $M_g$. \\
Pour tout polynôme $h(x)\in \mathbb{C}_{2g+1}^1[x]$. La fibre $M_g(h)$ au dessus de $h(x)$ par $\mathbf{H}$ est de dimension \begin{align*}M_g(h)&= \dim M_g- \dim (\im(\mathbf{H})), \\
&=3g+1-(2g+1), \\
M_g(h)&=g. 
\end{align*}

\bigskip

Rappelons la définition de la matrice jacobienne 
\begin{defi*}
Soit $\mathbf{F}=(F_l)_{1 \leqslant l \leqslant k}$ une fonction vectorielle de $V= \langle t_{i} \, \rvert \, 1 \leqslant i \leqslant n \rangle$ dans $\mathbb{C}^k$. 
La matrice jacobienne de $\mathbf F$ est la matrice des dérivées partielles du premier ordre d'une fonction vectorielle en un point donné $a\in V$, notée $J_\mathbf{F}\rvert_{a}$ avec les entrées 
$${j_{l, i}\rvert_{a}= \left(\frac{\partial F_l}{\partial t_i}\biggr\rvert_{a} \right), \text{ pour }
1 \leqslant l \leqslant k, \text{ et } 1 \leqslant i \leqslant n} $$
\end{defi*}
Supposons que $k\leqslant n$. 
Un point $a$ de $V$ est dit lisse de dimension $n-k$, si la matrice jacobienne au point $a$, $J_\mathbf{F}\rvert_{a}$ est de rang maximal $k$. 
Il nous arrivera d'appeler la matrice jacobienne de $\mathbf{F}$ par la jacobienne de $\mathbf{F}$.

\bigskip

Rappelons que $M_g$ est la variété algébrique affine donnée par les coefficients de $u(x), v(x), w(x)$ comme vecteurs de coordonnées, donc isomorphes à $C^{3g+1}$ en particulier irréductible. 
\\
Soit l'application surjective $\mathbf{H}$ définie par (\ref{H}), $\mathbf{H}(A)=uw+v^2$ qui est un polynôme unitaire en $ x$ de degré $2g+1$ ses coefficients non triviaux $(h_0, h_1, ... h_{2g})$ sont des polynômes en $(u_j)_{0 \leqslant j \leqslant g-1 }, $ $ (v_j)_{0 \leqslant j \leqslant g-1 } $, $ (w_i)_{0 \leqslant j \leqslant g}$. 
Soit $h(x)=x^{2g+1}+\sum\limits_{i=0}^{2g} a_ix^i $. On considère la sous-variété fermée $M_g(h)$ donnée par les $2g+1$ fonctions régulières $(h_i)_{0 \leqslant i \leqslant 2g }$ sur $M_g$ telles que $h_i-a_i=0$ pour $0 \leqslant j \leqslant g-1$. 
L'application $\mathbf{H}$ est surjective, alors la dimension de toute fibre $M_g(h)$ est $3g+1-(2g+1)=g$. 
Pour éviter toute confusion, nous notons par $u\rvert_{A^0}, v\rvert_{A^0}, w \rvert_{A^0}$ les polynômes tels que les coefficients des polynômes $u, v, w$ sont évalués au point $A^0 \in M_g$. 
\\
D'après Shafarevitch \cite{S}, la dimension de l'espace tangent en tout point $
A^0\in M_g(h)$ est donnée par $\dim M_g(h) – \rk J_{\mathbf{H}}\rvert_{A^0}$. 
%
 
La jacobienne $J_{\mathbf{H}}\rvert_{A^0}$ est \begin{equation}\label{b1}
 \frac{\partial \mathbf{H}}{\partial \tau} \biggr\rvert_{A^0} = \sum\limits_{i=0}^{2g}x^{i}\frac{\partial h_{i}}{\partial
 \tau}\biggr\rvert_{A^0}\;, 
\end{equation}
d'après $\mathbf{H}(A)=uw+v^2$ on a 
\begin{equation} \label{b2}
 \frac{\partial \mathbf{H}}{\partial \tau} \biggr\rvert_{A^0} =2v(x)\frac{\partial v(x)}{\partial \tau}\biggr\rvert_{A^0}+
 w(x)\frac{\partial u(x)}{\partial \tau}\biggr\rvert_{A^0}+u(x)\frac{\partial w(x)}{\partial \tau}\biggr\rvert_{A^0}\;. 
\end{equation}
Si $\tau$ est une des fonctions 
coefficients coordonnées $\{u_j\}_{0 \leqslant j \leqslant g-1 } \cup \{v_j\}_{0 \leqslant j \leqslant g-1 }\cup \{w_j\}_{0 \leqslant j \leqslant g }$
alors les égalités (\ref{b1}) et
(\ref{b2}) deviennent
\begin{equation}\label{b1*}
 \frac{\partial \mathbf{H}}{\partial u_{j}} \biggr\rvert_{A^0} = \sum\limits_{i=0}^{2g}x^{i}\frac{\partial h_{i}}{\partial
 u_{j}}\biggr\rvert_{A^0}, 
 \quad\frac{\partial \mathbf{H}}{\partial v_{j}} \biggr\rvert_{A^0} = \sum\limits_{i=0}^{2g}x^{i}\frac{\partial
 h_{i}}{\partial v_{j}}\biggr\rvert_{A^0}, 
 \quad\frac{\partial \mathbf{H}}{\partial w_{j}} \biggr\rvert_{A^0} =
 \sum\limits_{i=0}^{2g}x^{i}\frac{\partial h_{i}}{\partial w_{j}}\biggr\rvert_{A^0}\;, 
\end{equation}
et
\begin{equation}\label{b2*}
 \frac{\partial \mathbf{H}}{\partial u_{j}} \biggr\rvert_{A^0} =x^{j}w^{0}(x), 
 \quad\frac{\partial \mathbf{H}}{\partial v_{j}}
 \biggr\rvert_{A^0} =2x^{j}v^{0}(x), 
 \quad\frac{\partial \mathbf{H}}{\partial w_{j}} \biggr\rvert_{A^0} =x^{j}u^{0}(x). 
\end{equation}
En combinant les égalités de (\ref{b1*}) avec ceux de (\ref{b2*}), on obtient
\begin{equation}\label{7. }
 \sum\limits_{i=0}^{2g}x^{i}\frac{\partial h_{i}}{\partial u_{j}} \biggr\rvert_{A^0}= x^{j}w^{0}(x), \quad
 \sum\limits_{i=0}^{2g}x^{i}\frac{\partial h_{i}}{\partial v_{j}}
 \biggr\rvert_{A^0}=2x^{j}v^{0}(x), 
 \quad\sum\limits_{i=0}^{2g}x^{i}\frac{\partial h_{i}}{\partial w_{j}} \biggr\rvert_{A^0}=
 x^{j}u^{0}(x). 
\end{equation}

On a 
$J_\mathbf{H}\rvert_{A^0}$ avec les entrées 
\begin{align}\label{jac}
j_{i, j}=\left\{ \begin{array}{ll} w_{i-j}\rvert_{A^0} & \text{ pour } 1 \leqslant i \leqslant g, \\
2v_{i-j-g}\rvert_{A^0} & \text{ pour } g+1 \leqslant i \leqslant 2g, \\
u_{i-j-2g}\rvert_{A^0} & \text{ pour } 2g+1 \leqslant i \leqslant 3g+1, 
\end{array}\right. 
\end{align}
avec la convention suivante si $j \notin [0, g], k \notin [0, g-1]$ et $ l \notin [0, g+1]$ on a $u_j= 0, v_k= 0 $ et $z_l= 0 $. \\
On note par $(0^{i\times j})$ la matrice nulle de dimension $i\times j$. \\
Par la définition de la matrice Toeplitz (\ref{Matrx}) on peut réécrire $J_\mathbf{H}\biggr\rvert_{A^0}$ comme il suit 
\begin{align}\label{jaco}J_\mathbf{H}\biggr\rvert_{A^0}=\left(\begin{array}{l} M_{w, g}^t\rvert_{A^0} \\
M_{v, g}^t\rvert_{A^0} 0^{g\times 1} \\
M_{u, g+1}^t\rvert_{A^0} 
\end{array}\right), 
\end{align}
les matrices $M_{w, g}^t\rvert_{A^0}, 
M_{v, g}^t\rvert_{A^0}, 
M_{u, g+1}^t\rvert_{A^0}$ sont les matrices transposées de $M_{w, g}\rvert_{A^0}, 
M_{v, g}\rvert_{A^0}, 
M_{u, g+1}\rvert_{A^0}$ respectivement. 
\bigskip

%
La dimension de la matrice $J_\mathbf{H}
$ est $(3g+1)\times(2g+1)$ définissant une application linéaire de $M_g(h)$ vers $ \mathbb{C}^{2g+1}$
. \\


\begin{theo}\label{PG}
Soit $A^{0}(x)=\left(\begin{array}{cc} v^{0}(x) & u^{0}(x) \\w^{0}(x)& -v^{0}(x)\end{array}\right)\in M_{g}$. 
Le rang de la matrice jacobienne de $\mathbf{H}$ au point $A^{0}$ est égal à $2g+1-\rho(A^{0}(x))$. 
\end{theo}
\begin{proof}[Preuve]
De la définition (\ref{jaco}), la jacobienne $J_\mathbf{H}\rvert{A^{0}}$ au point $A^0$ et de (\ref{noyau}) on a 
 $$\dim \ker J_\mathbf{H}\rvert{A^{0}} = \deg \PGCD (u^0, v^0, w^0)=\rho(A^{0});$$
 par conséquent le rang de la matrice $J_\mathbf{H}\rvert{A^{0}}$ est
 $$\rk J_\mathbf{H}\rvert{A^{0}} = 2g+1-\rho(A^{0}(x)).$$ 

 \end{proof}
 \bigskip
Après \cite[Chapter II, Section 1.3]{S}, la dimension de l'espace tangent en un point $A^0(x)\in M_g(h)$ est donnée par $\dim M_g(h)-$ $\dim(\ker(J_\mathbf{H}(A^{0}(x)))$. Nous allons utiliser ce fait pour prouver la proposition suivante: %

 \begin{prop}\label{SM}
 Soit $h\in \mathbb{C}[x]^1_{2g+1}$. La strate maximale $M_{g, g}(h)$ est une variété quasi-affine lisse de dimension $g$, le bord de $M_{g, g}(h)$ est constitué de tous les points singuliers de $M_{g}(h)$. 
 \end{prop}
 \begin{proof}[Preuve]
Une matrice $A^{0}(x)\in M_g(h)$ appartient à la strate maximale $M_{g, g}(h)$ si et seulement si $\rho(A^{0}(x))=0$. 
Par définition on a qu'un point $A^0(x)$ de $M_g(h)$ est dit lisse en $M_g(h)$ 
si 
le rang de $J_{\mathbf{H}}(A^0(x))$ est maximal $2g+1$ 
c'est à dire $\rho(A^{0}(x))=0$. Le théorème \ref{PG} nous assure que $A^0(x)$ est dans la partie lisse de $M_{g}(h)$ si et seulement si $A^0(x)\in M_{g, g}(h)$ 
. La strate maximale $M_{g, g}(h)$ de $M_g(h)$ est composée de tous les points lisses de $M_g(h)$. Par conséquent le complémentaire $M_{g, g}(h)$ dans $M_g(h)$ est composé de tous les points singuliers et il est égal à $\bigsqcup\limits_{j \leqslant g-1 } M_{g, j}$. 
\end{proof}

De la proposition \ref{SM} et du fait que chaque strate $M_{g, i}(h)$ est l'union disjointe de sous-variétés
quasi-affines isomorphes à des strates maximales de systèmes de Mumford d'ordre $i$, on peut déduire directement le
corollaire suivant:
\begin{coro}\label{cor3.3}
 Soit $h\in \mathbb{C}^{2g+1}_1[x]$ et soit $i<g$. Chaque strate $M_{g, i}(h)$ de la fibre $M_{g}(h)$ est lisse et de dimension $i$, composée de sous-variétés 
 équidimensionelle de dimension $ i $ de $M_{g, i}(h)$. 
 \end{coro}
 \begin{proof}[Preuve]
 Rappelons que $
 \mathbb{C}_{g-i, h}[x] 
 $ est la famille finie de diviseurs quadratiques de degré $g-i$ de $h$. Soit $Q\in \mathbb{C}_{g-i, h}[x]$. La proposition \ref{SM} implique que l'unique strate maximale $M_{i, i}(\frac{h}{Q^2})$ de $M_{i}(\frac{h}{Q^2})$, qui est lisse de dimension $i$. L'image de $M_{i, i}(\frac{h}{Q^2})$ par l'isomorphisme $\mu_{Q}$ est un fermé irréductible de $M_{g, i}(h)$ de dimension $i$. On a donc 
 $$M_{g, i}(h)= \bigsqcup\limits_{ Q(x)\in\mathbb{C}_{g-i, h}[x] } \mu_{Q}M_{i, i}(\frac{h}{Q^2}). $$
La variété quasi-affine $M_{g, i}(h)$ est composée d'union disjointe de sous-variétés 
 équidimensionelle de dimension $ i $. 
 \end{proof}
\section*{Appendice}\label{ap}
Nous exposons ici la description originale de Mumford \cite{Mum} des polynômes $u(x)$ et $v(x)$. \\

On note par $\mathcal{C}^g_s$, le produit $g$ symétrique de $\mathcal{C}$. 
\\
Soit $\Delta$ un sous ensemble de $\mathcal{C}^g$, composé de $g$-uplet $\{((x_1, y_1), (x_2, y_2), \cdots, (x_g, y_g))\} \in\mathcal{C}^g_s$ tel qu'il existe $1 \leqslant i \neq j \leqslant g $ où $ y_i = - y_j\neq0$. On note par $n_i$ le nombre de fois où le couple $(x_i, y_i)$ apparaît dans le $g$-uplet. 


\bigskip

Fixons une courbe hyperelliptique $\mathcal{C}$ associée à un polynôme $h\in \mathbb{C}^1_{2g+1}[x]$. Nous allons adjoindre à chaque élément de $\mathcal{C}^g_s/\Delta$ un couple de polynômes par l'application $\phi$ suivante:
\begin{equation}\label{phi}\begin{array}{cccc}
\phi:& \mathcal{C}^g_s/\Delta& \longrightarrow & \mathbb{C}^1_g[x] \times \mathbb{C}_{g-1}[x]\\
& ((x_1, y_1), (x_2, y_2), \cdots, (x_g, y_g)) & \longrightarrow & (u(x), v(x))
\end{array}
\end{equation}
La définition du polynôme $u(x)$ est en corrélation avec les premières composantes $\{x_i \}_{ 1 \leqslant i \leqslant g}$ du $g$-uplet de $\mathcal{C}^g_s$ tel que:
 \begin{equation*}
u(x)=\prod\limits_{i=1}^g(x-x_i), \\
\end{equation*}
La description du polynôme $v(x)$ se fera à l'aide des équations linaires suivantes: \\
Pour tout $1 \leqslant i \leqslant g$, et $ 0 \leqslant k_i \leqslant {n}_i-1$, 
\begin{equation}\label{polyv}
\displaystyle{
\frac{d^k}{dt}(v(x)\pm\sqrt{h(x)})\Big\rvert _{x=x_i}=0 \;\;\; 
 \text{ si } y_i= \mp\sqrt{h(x_i)} \text{ \; et \;} \frac{d^k}{dt}u(x)\Big\rvert _{x=x_i}=0. 
 }
\end{equation}
Notez que lorsque les 
 $(n_i)_{1 \leqslant i \leqslant g}=(1)_{1 \leqslant i \leqslant g}$, c'est à dire quand $(x_i)_{1 \leqslant i \leqslant g}$ sont tous distincts, le polynôme ${v(x)=\sum\limits_{i=1}^{g}y_i\prod\limits_{j=1, j \neq i }^g {\frac{ x-x_j}{x_i-x_j}}}$.

\section*{Notations}
Section \ref{section2}: $\mathcal{C}$, $u, v, w$, 
$u_k$, $v_k$, $M_g$, $\mathbf{H}$, 
$\mathbf{H}_z$, $D_z$, 
$D_i$. \\
Section \ref{section3}: $\rho$, 
$(S_{g, i})$,$\sigma$, $ M_{g, i}$,$\mu_{P}$.\\
Section \ref{section4}: $\mathbb{C}_k[x]$, 
 $\mathbb{C}^1_k[x]$, $\mathbb{C}[x]_{h}$
,$M_{g, Q}(h)$, $M_{g, k, Q}(h)$ 
,$J_\mathbf{H}(A^{0})$.

\end{document}